\definecolor{alertmanzano}{rgb}{0.8,0,0.3}
\newcommand{\alertm}[1]{%
	\marginpar{%
		\ifodd\value{page} \raggedright \else \raggedleft \fi
		\footnotesize{\textcolor{alertmanzano}{#1}}
	}
}
\newcommand{\Z}{\mathbb{Z}}
\newcommand{\R}{\mathbb{R}}
\newcommand{\h}{\mathbb{H}}
\newcommand{\E}{\mathbb{E}}
\newcommand{\PSL}{\widetilde{\mathrm{SL}}_2(\mathbb{R})}
\newtheorem{theorem}{Theorem}[section]
\newtheorem*{theorem*}{Theorem}
\newtheorem{proposition}{Proposition}[section]
\newtheorem{corollary}{Corollary}[section]
\newtheorem{lemma}{Lemma}[section]
\theoremstyle{definition}
  \newtheorem{definition}{Definition}[section]
\theoremstyle{remark}
  \newtheorem{remark}{Remark}
\newtheorem{claim}{Claim}
\numberwithin{equation}{section}
\title{On the asymptotic Plateau problem in  $\PSL$}
\date{}
\author{Jes\'us Castro-Infantes}
\address{Universidad de Granada}
\email{jcastroinfantes@ugr.es}
\keywords{Minimal surfaces, homogeneous 3-manifolds}
\begin{document}

\begin{abstract}
 We prove some non-existence results for the asymptotic Plateau problem of minimal and area minimizing surfaces in the homogeneous space $\PSL$ with isometry group of dimension 4, in terms of their asymptotic boundary. Also, we show that a properly immersed minimal surface in $\PSL$ contained between two bounded entire minimal graphs separated by vertical distance less than $\sqrt{1+4\tau^2}\pi$ has multigraphical ends. Finally, we construct simply connected minimal surfaces with finite total curvature which are not graphs and  a family of complete embedded minimal surfaces which are non-proper in $\PSL$. 
\end{abstract}

\maketitle
\section{introduction}

The simply connected homogeneous 3-manifolds with isometry group of dimension 4 can be classified as a family of spaces $\mathbb E(\kappa,\tau)$ with $\kappa,\tau\in\R$ ($\kappa-4\tau^2\neq0$). There exists a Riemannian Killing submersion $\Pi: \mathbb E(\kappa,\tau)\to \mathbb M^2(\kappa)$ with bundle curvature $\tau$  over the simply connected complete surface $\mathbb M^2(\kappa)$ of constant curvature $\kappa$, see \cite{Dan,Man}. If $\tau=0$,  one obtains the Riemannian product space $\mathbb M^2(\kappa)\times\R$. If $\tau\neq 0$, one obtains the Berger spheres $(\kappa>0)$, the Heisenberg space $(\kappa=0)$, and $\PSL$, the universal cover of the special linear group  with some special left invariant metrics $(\kappa< 0)$.

In this paper we focus on the space $\PSL$, where we give some new ideas showing that some results about minimal surfaces in the product space $\h^2\times\R$ can be extended to this ambient. All the results in this paper include the case $\tau=0$, which corresponds to $\h^2\times\R$.

%
 We identify topologically the space $\PSL$ with $\h^2\times\R$, and we will consider the half space model and the cylinder model for $\PSL$, see section 2. The Riemannian submersion into $\h^2$ reads as $\Pi(x,y,t)=(x,y)$ in these models.
 Considering the product compactification for $\PSL$,  we have that  the asymptotic boundary of $\PSL$ consists of the vertical boundary ${\partial_\infty \h^2 \times\R }$ and the horizontal boundaries $\h^2\times \{ +\infty \}$ and $\h^2\times \{ -\infty \}$. We denote the asymptotic boundary of $\PSL$ as  $\partial_\infty \PSL$.  {\it The asymptotic Plateau} problem consists in: Given $\Gamma$, a finite collection of simple closed curves in the asymptotic boundary of $\PSL$, decide if there is an area minimizing or a minimal surface with asymptotic boundary $\Gamma$. We will call {\it curve}  a finite collection of disjoint simple  closed curves.

   In the last few years the asymptotic Plateau problem has been studied in the product space $\h^2\times\R$, that is, the case  $\tau=0$. Many of these results (discussed below)  are based in the notion of height of a curve, that easily extends to any $\tau\in\R$: 
   \begin{definition}[Definition 1.1 in \cite{KMR}]\label{d:tall}
   	Let $\Gamma$ be a collection of pairwise disjoint simple closed curves in $\partial_\infty \PSL$, and $\Omega=\partial_\infty \PSL\backslash \Gamma$. For each $p\in \partial_\infty \h^2$ let $h_\Gamma(p)$ be the length of the shortest connected component of $(\{p\}\times\R)\cap \Omega$ and define $h_\Gamma=\underset{p\in \partial_\infty\h^2}{\inf}h_\Gamma(p)$. We say that $\Gamma$ is  \emph{tall} if $h_\Gamma(p)>\sqrt{1+4\tau^2}\pi$ for any $p\in\partial_\infty\h^2$.
      \end{definition}
  The number $\sqrt{1+4\tau^2}\pi$ is relevant since if we consider the cylinder model of $\PSL$ and  $\Gamma$ is the disjoint union of two horizontal circles in $\partial_\infty\h^2\times \R$, then there exist rotational catenoids with asymptotic boundary $\Gamma$  if and only if $h_\Gamma<\sqrt{1+4\tau^2}\pi$.
    We emphasize the following contributions when $ \tau=0$:
\begin{itemize}
\item Nelli and Rosenberg  proved in \cite{NR} that for any closed simply curve $\Gamma$ projecting graphically into $\partial_\infty \h^2$, there exists a unique entire minimal vertical  graph in $\h^2\times\R$ with asymptotic boundary $\Gamma$.

\item  Sa Earp and Toubiana   proved in \cite{ST} a general non existence result for minimal surfaces in $\h^2\times \R$, see Theorem \ref{theorem: Asym} for $\tau=0$. They  obtain as a consequence that there are no minimal surfaces with asymptotic boundary a Jordan curve homologous to zero in $\partial_\infty\h^2\times\R$ strictly contained in a slab of width $\pi$. 

\item 	Coskunuzer  proved in \cite{C} that for a tall curve $\Gamma$, there exists an area minimizing surface (possibly disconnected) with asymptotic boundary $\Gamma$. He also gave a non existence result for area minimizing surfaces when the height of the curve is less than $\pi$ in a open arc (see Definition \ref{d:tall}).
	
\item	Kloeckner and Mazzeo considered in \cite{KM} the problem for curves $\Gamma$ with parts in the horizontal asymptotic boundaries $\h^2\times\{\pm \infty \}$, showing that the unique parts of $\Gamma$ in these  horizontal asymptotic boundaries must be geodesics. They also gave an example of a curve $\Gamma$ in the asymptotic boundary whose height is less than $\pi$, for which there is a  minimal surface with asymptotic boundary $\Gamma$ and there is no  area minimizing surface with this asymptotic boundary.

\item 	Ferrer, Martín, Mazzeo and Rodríguez in \cite{FMMR}  proved existence and non-existence results for minimal annuli having two curves in the asymptotic boundary projecting graphically onto $\partial_\infty \h^2$.
\end{itemize}

 In  the case  $\tau\neq 0$, Folha and Peñafiel in \cite{FP} and  Klaser, Menezes and Ramos in a recent work  \cite{KMR} extend some of these results to the space $\PSL$. Folha and Peñafiel proved that for any closed simply curve $\Gamma$ projecting graphically into $\partial_\infty \h^2$, there exists a unique entire minimal vertical  graph in $\PSL$ with asymptotic boundary $\Gamma$.  Klaser, Menezes and Ramos prove that for a \emph{tall} curve $\Gamma$ in $\PSL$, there exists an area minimizing surface (possibly disconnected) with asymptotic boundary $\Gamma$. To this end, they use as barriers the surfaces called \emph{tall rectangles}, whose asymptotic boundary is a rectangle with height $h>\sqrt{1+4\tau^2}\pi$, see Section~3.
  
   They also obtain a non-existence  result for area minimizing surfaces when the height of the curve is less than $(\sqrt{1+4\tau^2}-4\tau)\pi$ in an open arc. This estimate is only valid when $|\tau|<\frac{1}{\sqrt{12}}$, because of the fact that  hyperbolic horizontal translations do not preserve the $t$-coordinate, and they need to send the intersection of a rotational catenoid  with a horizontal slab to a neighbourhood of an ideal point in $\partial_\infty\h^2\times\R$ by a hyperbolic translation controlling the boundary components.

 Here we prove a   general non existence result for minimal surfaces that extends  Theorem 2.1 in \cite{ST} to the case $\tau\neq 0$:
 
 \begin{theorem}\label{theorem: Asym}
 	Let $\Gamma$ be a curve in $\partial_\infty\PSL$ and assume that there exist a vertical line $L$ in $\partial_\infty\h^2\times\R$  and a subarc $\Gamma'\subset\Gamma$  such that: 
 	\begin{enumerate}
 		\item $\Gamma'\cap L \neq \emptyset$ and $\partial \Gamma'\cap L =\emptyset,$
 		\item $\Gamma'$ lies on one side of $L$, and
 		\item $\Gamma'$ is contained in $\{(x,0,t),\ t_0<t<t_0+\sqrt{1+4\tau^2}\pi\}$.
 	\end{enumerate}
 	Then, there is no properly immersed minimal surface in $\PSL$   (with possibly finite boundary)  with asymptotic boundary $\Gamma$.
 	
 \end{theorem}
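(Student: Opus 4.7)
The plan is to extend the Sa Earp--Toubiana argument \cite{ST} by using as barriers the rotational catenoids of $\PSL$ of height $h<\sqrt{1+4\tau^2}\pi$, whose existence in this regime is recalled in Section~3 (cf.\ \cite{KMR}), together with a sliding/maximum-principle argument. After an ambient isometry I may assume in the half-space model that $L=\{(0,0,t):t\in\R\}$, that $\Gamma'$ lies in the side $\{x>0\}$ of $L$, and that hypothesis $(3)$ holds. Since $\Gamma'$ is a compact subarc strictly contained in the open slab $\{t_0<t<t_0+\sqrt{1+4\tau^2}\pi\}$, I fix $h<\sqrt{1+4\tau^2}\pi$ and $s_0\in\R$ so that $\Gamma'\subset\{s_0<t<s_0+h\}$. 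Let $C_0\subset\PSL$ be a rotational catenoid of height $h$, with axis $\Pi^{-1}(q_0)$ for some $q_0\in\h^2$ far from $L$ on the side $\{x<0\}$ and with asymptotic circles at heights $s_0$ and $s_0+h$. By construction $C_0\cap\Sigma=\emptyset$ for any properly immersed minimal surface $\Sigma$ with $\partial_\infty\Sigma=\Gamma$.

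Next, I construct a $1$-parameter family of congruent catenoids $C_\alpha=\varphi_\alpha(C_0)$, $\alpha\ge 0$, by taking $\varphi_\alpha$ to be a parabolic isometry of $\h^2$ fixing the ideal point $p_0\in\partial_\infty\h^2$ of $L$ (lifted to $\PSL$), composed with the vertical translation that cancels the $t$-shift induced by the lift. Each $C_\alpha$ is then a rotational catenoid whose asymptotic circles remain at the fixed heights $s_0$ and $s_0+h$ and whose axis $\Pi^{-1}(q_\alpha)$ satisfies $q_\alpha\to p_0$ in $\partial_\infty\h^2$ as $\alpha\to\infty$; the limit $C_\infty$ is a horocatenoid-type minimal surface whose asymptotic boundary contains the segment $\{(0,0,t):s_0\le t\le s_0+h\}$ of $L$. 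Because $\Gamma'$ crosses $L$ at an interior point while remaining on the side $\{x>0\}$, and because $\Sigma$ is proper with $\partial_\infty\Sigma\supset\Gamma'$, the sets $\Sigma\cap C_\alpha$ must become non-empty for some finite $\alpha$. Let $\alpha^\ast$ be the smallest such parameter. An interior contact at $\alpha^\ast$ would force $\Sigma=C_{\alpha^\ast}$ by the strong maximum principle, contradicting $\partial_\infty\Sigma=\Gamma\neq\partial_\infty C_{\alpha^\ast}$ (the latter contains two full horizontal circles). An asymptotic-only contact at $\alpha^\ast$ is excluded by a boundary maximum principle at infinity applied in neighbourhoods of $L$ and of the horizontal asymptotic arcs of $C_{\alpha^\ast}$, using that $\partial_\infty C_{\alpha^\ast}$ is disjoint from $\Gamma$ by the slab condition $(3)$ and the side condition $(2)$. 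If $\Sigma$ has compact boundary in $\PSL$, one simply enlarges $|q_0|$ so that $C_0$ initially avoids $\partial\Sigma$ as well.

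The main technical obstacle, absent when $\tau=0$, is precisely the fact emphasized in the introduction: horizontal hyperbolic or parabolic isometries of $\PSL$ do not preserve the $t$-coordinate when $\tau\neq 0$, so a naive sliding of $C_0$ by parabolic translations of the base would drift its asymptotic circles out of the required slab, and in particular outside $\{t_0<t<t_0+\sqrt{1+4\tau^2}\pi\}$, losing barrier control against $\Gamma$. The resolution is to compensate this drift, which is a continuous (in fact linear) function of the sliding parameter, by an explicit vertical translation; this produces the family $\varphi_\alpha$ under which both the rotational symmetry of $C_0$ and its asymptotic heights are preserved, and makes the Sa Earp--Toubiana scheme applicable in the twisted setting.
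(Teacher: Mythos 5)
There is a genuine gap, and it is exactly the obstruction the paper is designed to get around. Your sliding family $C_\alpha=\varphi_\alpha(C_0)$ does not exist as you describe it: by the lifting formula \eqref{isometrias}, a parabolic or hyperbolic isometry $f$ of $\h^2$ lifts to $\bar f(z,t)=(f(z),\,t-2\tau\arg f'(z))$, so the induced $t$-drift is $-2\tau\arg f'(z)$, a \emph{non-constant} function of the point $z$ on the surface (it is constant only for isometries fixing $\infty$ in the half-space model, i.e.\ $c=0$). Hence the drift is not ``a continuous (in fact linear) function of the sliding parameter'' that a single vertical translation can cancel. Concretely, when a rotational catenoid is pushed toward an ideal point by such lifted isometries, its two asymptotic circles become tilted curves whose vertical span grows by up to $4|\tau|\pi$ (in the half-space model the catenoid's asymptotic boundary is already $\{(x,0,4\tau\arctan(x)\pm h^+(1))\}$, see Section~3(4)); one cannot keep both boundary components at the fixed heights $s_0$ and $s_0+h$ outside the slab containing $\Gamma'$. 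This is precisely why the catenoid-based argument of Klaser--Menezes--Ramos only yields non-existence below height $(\sqrt{1+4\tau^2}-4\tau)\pi$ and only for $|\tau|<1/\sqrt{12}$, and why the introduction states that the Sa Earp--Toubiana scheme with rotational catenoids cannot be transplanted verbatim when $\tau\neq0$. Your ``horocatenoid-type'' limit surface is likewise unsubstantiated.

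The paper's proof avoids catenoids altogether: it constructs, via the Douglas criterion (Proposition \ref{annulus}), compact minimal annuli $A_h$ for every $h<\sqrt{1+4\tau^2}\pi$ whose two boundary circles lie in horizontal minimal slices $\{t=\mathrm{const}\}$ of the half-space model (these annuli are \emph{not} slabs of rotational catenoids), and then slides them by the half-space hyperbolic translations $(x,y,t)\mapsto(c+\lambda(x-c),\lambda y,t)$, which genuinely preserve the $t$-coordinate, toward the arc $\Gamma'$ inside a region $G_\epsilon$ bounded by a vertical plane and two slices; the first interior contact with $M_0=M\cap G_\epsilon$ contradicts the maximum principle because the annuli's boundaries never enter the sub-slab containing $M_0$. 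To repair your argument you would need either these compact annuli (or some other barrier whose boundary stays in horizontal slices under a $t$-preserving sliding family); with rotational catenoids alone the height hypothesis $\sqrt{1+4\tau^2}\pi$ cannot be reached for general $\tau$.
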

To this end, we can not use rotational catenoids in the argument as Sa Earp and Toubiana did. The principal problem is that  hyperbolic translations in the cylinder model do not preserve the $t$-coordinate, hence we can not send a compact piece of a rotational catenoid to a neighbourhood of an ideal point in $\partial_\infty\h^2\times\R$  so that the two components of the boundary can be separated by a horizontal open slab. To overcome this issue, we construct a family of compact minimal annuli in  $\PSL$ and work in the half space model using  hyperbolic translations that preserve the $t$-coordinate. Moreover, we can use these annuli to obtain a result as in \cite{KMR}, which gives information for all $\tau\in \R$:
   
   \begin{theorem}\label{minimizing}
   	Let $\Gamma$ be a curve in $\partial_{\infty}\PSL$, and assume that there exists an interval $I\subset\partial_\infty \h^2$ such that $h_\Gamma(p)<\sqrt{1+4\tau^2}\pi$ for all $p\in I$. Then  there are no  area minimizing  surfaces  in $\PSL$  (with possibly finite boundary) with asymptotic boundary $\Gamma$. 
   \end{theorem}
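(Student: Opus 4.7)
The plan is to argue by contradiction, adapting Coskunuzer's barrier argument \cite{C} to the $\PSL$ setting with the help of the compact minimal annuli constructed earlier in this paper. Suppose $\Sigma\subset\PSL$ is an area-minimizing surface with $\partial_\infty\Sigma=\Gamma$. Using the openness of $I\subset\partial_\infty\h^2$ and the smoothness of the curves in $\Gamma$, I would fix $p_0\in I$ and heights $t_0<t_1$ with $t_1-t_0<\sqrt{1+4\tau^2}\pi$ such that $(p_0,t_0),(p_0,t_1)\in\Gamma$ bound a short component of $(\{p_0\}\times\R)\cap\Omega$. A small perturbation of $p_0$ within $I$ lets me further assume that the two arcs of $\Gamma$ through the points $(p_0,t_i)$ are smooth graphs over a small subinterval $J\subset I$ containing $p_0$.

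The next step is to produce the barrier. Working in the half-space model of $\PSL$, in which hyperbolic translations preserve the $t$-coordinate, I would apply such a translation (specifically a dilation that concentrates things near $p_0\in\partial_\infty\h^2$) to one of the compact minimal annuli from the family built in the paper, choosing its vertical extent $h$ in the range $(t_1-t_0,\sqrt{1+4\tau^2}\pi)$. This produces a compact minimal annulus $A\subset\PSL$ whose boundary circles $C_0,C_1$ sit in horizontal planes $t=s_0$ and $t=s_1$ with $s_0<t_0<t_1<s_1$, and whose $\h^2$-projection lies in an arbitrarily small disk around $p_0$ contained in $J$.

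The annulus $A$, capped by the two planar regions in $\{t=s_i\}$ bounded by $C_i$, becomes a topological sphere enclosing a compact region $W\subset\PSL$ whose ideal closure contains the short vertical asymptotic segment $\{p_0\}\times[t_0,t_1]$. Since the two arcs of $\Gamma$ through $(p_0,t_0)$ and $(p_0,t_1)$ lie on opposite sides of $W$ at infinity, the surface $\Sigma$ must intersect $A$, and the intersection is a compact set (because $A$ is compact and $\partial_\infty A=\emptyset$ is disjoint from $\Gamma$). Taking a simple closed curve $\gamma\subset A\cap\Sigma$ and performing the standard exchange, swapping compact pieces of $\Sigma$ and $A$ on one side of $\gamma$, produces either a surface with the same asymptotic boundary $\Gamma$ and strictly smaller area (contradicting area-minimality of $\Sigma$), or an equality $\Sigma=A$ on an open set, which by analytic continuation is ruled out since $\Sigma$ and $A$ have different asymptotic boundaries.

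The main obstacle I anticipate is the barrier construction: the compact minimal annuli in $\PSL$ must be available with vertical extent arbitrarily close to the critical value $\sqrt{1+4\tau^2}\pi$ and small $\h^2$-projection, so that after a suitable hyperbolic translation in the half-space model they localize near any prescribed ideal point $p_0$ with boundary circles straddling the given vertical gap. Once the flexibility of these barriers is secured, the cut-and-paste exchange is the standard non-existence argument for area-minimizing surfaces.
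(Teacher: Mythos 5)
Your overall strategy (use the compact minimal annuli of Proposition \ref{annulus}, moved by $t$-preserving hyperbolic translations in the half-space model, as barriers against an area-minimizing $\Sigma$, then cut-and-paste) is the same as the paper's, but two key steps have genuine gaps. First, the forced-intersection step fails as stated. Your capped annulus bounds a \emph{compact} region $W\subset\PSL$, so its closure in the compactification contains no ideal points; it cannot ``contain the short vertical asymptotic segment $\{p_0\}\times[t_0,t_1]$'' in its ideal closure, and the two arcs of $\Gamma$ do not lie on ``opposite sides'' of $W$ in any sense that forces $\Sigma\cap A\neq\emptyset$: since $\PSL$ is topologically $\R^3$, the complement of the capped sphere has one bounded and one unbounded component, and both strands of $\Gamma$ (and all of $\Sigma$ near infinity) are reached from the unbounded one, so nothing in your setup prevents $\Sigma$ from avoiding $A$ entirely. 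The paper obtains the intersection differently: it chooses an ideal point $p_0$ strictly \emph{between} the two strands, uses properness of $M$ to find a neighborhood $V$ of $p_0$ with $M\cap V=\emptyset$, and translates the annulus along the geodesic $\{x=0\}$ until $\Pi(\bar A_h)\subset\Pi(V)$, so that the annulus runs through $V$ across the slab and meets $M_0$ near \emph{each} of the two asymptotic strands; this yields two intersection curves $\gamma_1,\gamma_2$, one above and one below $V$, and this two-curve structure is essential for what follows.

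Second, the exchange with a single curve $\gamma\subset A\cap\Sigma$ is not well defined. If $\gamma$ is essential in the annulus $A$, neither component of $A\setminus\gamma$ is a compact piece with boundary $\gamma$ alone (each carries a component of $\partial A$), so there is nothing to swap; if $\gamma$ is nullhomotopic in $A$, it bounds a disk there but need not bound any compact piece of the noncompact surface $\Sigma$. The paper's case analysis exists precisely for this: for essential curves it uses $\gamma_1\cup\gamma_2$ cobounding a compact subannulus $\mathcal A\subset\bar A_h$ glued to part of $M_0$; for the nullhomotopic case it either swaps disks or uses a compact piece $\mathcal M_0\subset M_0$ bounded by $\gamma_1$ together with finitely many further intersection curves. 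Moreover the contradiction is not ``strictly smaller area or $\Sigma=A$ on an open set by analytic continuation'': after the exchange one gets a surface that is still area minimizing but non-smooth along the gluing curves, contradicting interior regularity of area minimizers (equivalently, the maximum principle after the swap); with your dichotomy, an equal-area exchange would produce no contradiction. (Minor: you also assume smoothness/graphicality of $\Gamma$ near $p_0$, which is neither hypothesized nor needed; only the local structure given by $h_\Gamma(p)<\sqrt{1+4\tau^2}\pi$ on $I$ is used.)
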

Observe that  Theorem \ref{theorem: Asym} and Theorem \ref{minimizing}  are local so the hypothesis that $\Gamma$ is  disjoint and simple  is necessary only in a neighbourhood.  Moreover, in both theorems the height estimate is sharp except for possibly  the critical case of $h_\Gamma=\sqrt{1+4\tau^2}\pi$.

Another important problem in the theory of minimal surfaces is to classify such surfaces by their topological type. Collin, Hauswirth and Rosenberg proved in \cite{CHR} that a properly immersed minimal surface in  $\h^2\times\R$ of finite topology  inside a slab of width  strictly less than $\pi$ has multigraphical ends. Moreover, if the surface is embedded, it has graphical ends; and if in addition it is simply connected, then it is an entire graph. This result is known as The Slab Theorem. Later, Lima in \cite{Lima} extended this result to the case  $\tau\neq 0$ by replacing the slab region by a \emph{ generalized slab region}, according to the following definition:

\begin{definition}[Definition 1 \cite{Lima}]\label{d:slab}
We say that a region $\mathcal R$ in $\PSL$ is a generalized slab if the following conditions are satisfied:
\begin{enumerate}

	\item $\mathcal R$ is a domain bounded by two disjoint entire vertical graphs $S_1$ and $S_2$ with bounded height, and the tangent planes of $S_1$ and $S_2$ are bounded away from the vertical, this is, there exists $c>0$ such that $\nu_i^2>c$, being $\nu_i=\langle N_i,\partial_t\rangle$ the angle function and $N_i$ the unit normal vector to the surface $S_i$, $i=1,2$.
	
	\item There is a $\mathcal C^1$ map $\Psi :\mathcal R\times (\mathbb S^1\times [-1,1]) \mapsto \PSL$ such that for each $p\in \mathcal R$ we have that $C(p)= \Psi (p,\mathbb S^1\times [-1,1])$ is a minimal annulus containing $p$ and its two boundary curves lie one above $\mathcal R$ and the other  below $\mathcal R$. Moreover, any two annuli of the family $\{C(p):\ p\in \mathcal R  \}$ are isometric
	to each other.

\end{enumerate}

\end{definition}

We prove a Slab Theorem in $\PSL$ when $\mathcal R$ is the region between an entire minimal graph $G_1$ in the cylinder model whose asymptotic boundary is a closed graphical curve over $\partial_\infty\h^2$ and it is bounded away from the vertical, and its translated copy $G_2:=G_1+(0,0, \sqrt{1+4\tau^2}\pi-\epsilon)$, where $\epsilon$ is any positive number less than $\sqrt{1+4\tau^2}\pi$.

\begin{theorem}\label{t:slab}
	Let $M$ be a properly immersed minimal surface of finite topology in $\PSL$  contained in the region between $G_1$ and $G_2$. Then, each  end of M is a multigraph. Moreover:
	
	\begin{enumerate}
		\item If $M$ is embedded, then the neighbourhood of any  of its ends  is a graph over the complement of a disk in $\h^2$.
		
		\item If $M$ is embedded and has only one end, then $M$ is an entire graph.
	
	\end{enumerate}
	
\end{theorem}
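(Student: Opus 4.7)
The plan is to deduce Theorem~\ref{t:slab} from Lima's Slab Theorem in \cite{Lima} by showing that the region $\mathcal R$ between $G_1$ and $G_2$ is a generalized slab in the sense of Definition~\ref{d:slab}. Since $G_2$ is the image of $G_1$ under a vertical translation, which is an ambient isometry of $\PSL$, every geometric property of $G_1$ transfers verbatim to $G_2$.

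Condition~(1) of Definition~\ref{d:slab} is essentially built into the hypotheses. The graphs $G_1$ and $G_2$ are disjoint because $\epsilon<\sqrt{1+4\tau^2}\pi$; they are entire minimal vertical graphs by assumption; both have bounded height because their asymptotic boundaries are closed graphical curves over $\partial_\infty\h^2$, and the maximum principle propagates this $t$-bound to the interior; finally, the angle function bound $\nu_i^2>c$ is precisely the hypothesis that the tangent planes of $G_1$ are bounded away from the vertical.

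The main step is verifying condition~(2), the existence of the isometric family of minimal annuli $\{C(p):p\in\mathcal R\}$. For this I would use the compact minimal annuli $A$ constructed in the proof of Theorem~\ref{theorem: Asym}, whose vertical extent can be prescribed arbitrarily close to $\sqrt{1+4\tau^2}\pi$ from below. For each $p\in\mathcal R$, define $C(p)$ to be the image of $A$ under the unique ambient isometry, a composition of a horizontal hyperbolic translation in the half-space model (which preserves the $t$-coordinate) with a vertical translation, that sends a fixed reference point of $A$ to $p$. All the $C(p)$ are then pairwise isometric minimal annuli through $p$, and the resulting map $\Psi$ depends smoothly on $p$.

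The hard part will be ensuring that the two boundary circles of every $C(p)$ lie, respectively, strictly above $G_2$ and strictly below $G_1$, uniformly in $p\in\mathcal R$. Because horizontal hyperbolic translations preserve the $t$-coordinate and $G_1$ has bounded height, the boundary circles of $C(p)$ stay in a fixed horizontal strip as $p$ varies horizontally, with the strip only shifting vertically with the $t$-coordinate of $p$. One must therefore choose the vertical extent of $A$ large enough, close enough to $\sqrt{1+4\tau^2}\pi$, to exceed the width $\sqrt{1+4\tau^2}\pi-\epsilon$ of $\mathcal R$ together with the oscillation of $G_1$ over the compact horizontal projection of $A$; the hypothesis that the angle function of $G_1$ is bounded away from zero is what controls this oscillation in a uniform way. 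Once both conditions of Definition~\ref{d:slab} are verified, Lima's Slab Theorem applies and yields in turn the three conclusions of the statement: each end of $M$ is a multigraph in general, the ends are graphs when $M$ is embedded, and $M$ is an entire graph when additionally $M$ has a single end.
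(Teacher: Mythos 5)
Your strategy of verifying Definition~\ref{d:slab} and then quoting Lima's theorem as a black box is exactly what the paper avoids, and the obstruction sits precisely at the point you yourself flag as ``the hard part''. The compact annuli available (Proposition~\ref{annulus}, Lemma~\ref{lemma:an}) have their two boundary circles in horizontal slices separated by a height $h$ that is \emph{strictly} less than $\sqrt{1+4\tau^2}\pi$; moreover $h$ approaches $\sqrt{1+4\tau^2}\pi$ only as the radii $\bar\rho,\rho$ of the boundary circles tend to infinity (and the non-existence proposition at the end of Section~4 shows no compact minimal annulus can do better). For $C(p)$ to have one boundary curve above $G_2$ and the other below $G_1$, you need $h$ to exceed the width $\sqrt{1+4\tau^2}\pi-\epsilon$ of $\mathcal R$ \emph{plus} the oscillation of the height function of $G_1$ over the hyperbolic disk of radius $\rho$ onto which the annulus projects. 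Since the available margin $h-(\sqrt{1+4\tau^2}\pi-\epsilon)$ is less than $\epsilon$, this forces that oscillation to be less than $\epsilon$ over disks of radius $\rho$, with $\rho$ necessarily large. Neither hypothesis delivers this: the angle function being bounded away from the vertical only gives a gradient bound, hence an oscillation bound growing linearly in $\rho$ (so making $h$ larger makes the requirement harder, not easier), and bounded height only gives a fixed constant which has no reason to be smaller than an arbitrarily small $\epsilon$. So condition~(2) of Definition~\ref{d:slab} cannot be verified for every $p\in\mathcal R$ by transplanting $A_h$ with $t$-preserving isometries, and the black-box appeal to Lima's theorem collapses. (A secondary point: you place the annuli using $t$-preserving isometries of the half-space model, while $G_1,G_2$ and the conclusion live in the cylinder model; the change of model shifts the $t$-coordinate by $4\tau\arctan(\cdot)$, up to $2\tau\pi$, which is only negligible when the annulus is pushed towards the ideal boundary --- this is the content of the computation in Lemma~\ref{lemma:an}.)

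This is why the paper does something weaker but sufficient: it only produces annuli over regions $\Pi^{-1}(D_i\setminus\mathcal D)$ close to the asymptotic boundary, where the graphs $G_1,G_2$ are within $\epsilon_1$ of their (graphical, continuous) asymptotic values, so the oscillation is genuinely small there; it then connects these annuli by continuous isometric families whose boundaries never meet $M$, and re-runs Lima's argument (tall rectangles tangent to the surface together with the Dragging Lemma of Collin--Hauswirth--Rosenberg) instead of invoking his theorem. To repair your proposal you would have to either restrict the family $C(p)$ to points $p$ near infinity and then supply the Dragging Lemma argument yourself, or prove a smallness-of-oscillation statement for $G_1$ over large disks that is simply not available under the stated hypotheses.
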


To this end, we construct a continuous family of minimal annuli that plays the role of the  the map $\Psi$ in Definition \ref{d:slab}, and we conclude using  the same  ideas as in the proof of the Slab Theorem in \cite{Lima}.

 Related to this topic are the constructions of minimal surfaces in $\h^2\times\R$ in \cite{RP}.  They solved the asymptotic Plateau problem for some special curves $\Gamma$ composed of vertical straight lines in $\partial_\infty\h^2\times\R$ and horizontal geodesics in $\h^2\times\{\pm\infty\}$. These examples are interesting since they are non-flat complete simply connected minimal surfaces with finite total curvature, and they are not  graphs. In Section 7.1 we construct the analogous examples in $\PSL$ which have also finite total curvature due to Theorem  8 in \cite{HMR}. Recently, Collin, Hauswirth and Nguyen in \cite{CHN} have constructed minimal annuli with finite total curvature via variational methods. It would be interesting  to obtain examples with higher topology similar to the minimal $k$-noids in \cite{MorRod}, or with positive genus as in \cite{CasMan,MMR}.
 
 Similar techniques as in \cite{RP}  are used in \cite{RT} to construct complete embedded minimal surfaces in $\h^2\times \R$ which are non proper. These surfaces  are interesting in relation to the Calabi-Yau conjecture for embedded minimal surfaces. This conjecture says that any complete embedded minimal surface in $\R^3$ is necessarily proper.  Colding and Minicozzi in \cite{CM} showed that any complete minimal surface embedded in $\R^3$ with finite topology is proper. Meeks, Pérez and Ros proved in \cite{MPR} that any complete  minimal surface embedded in $\R^3$ with  an infinite number of ends and  finite genus is proper if and only if it has at most two simple limit ends if and only if it has a countable number of limits ends. The examples constructed by Rodríguez and Tinaglia in \cite{RT} show that the conjecture does not hold in $\h^2\times\R$. We construct the analogous examples in Section 7.2 showing that the conjecture does not hold in $\PSL$ either, as it was expected.

 The paper is organized as follows: In Section 2 we recall the isometries  of $\PSL$.  In Section 3 we describe some invariant minimal surfaces obtained as vertical graphs in the half space model of $\PSL$. In Section 4 we construct a family of compact minimal annuli which will be used to prove Theorem \ref{theorem: Asym} and Theorem  \ref{minimizing} in Section~5. Section 6 is devoted to prove Theorem \ref{t:slab}. Finally, in Section 7 we extend the constructions of minimal surfaces of \cite{RP} and \cite{RT} to the space $\PSL$.

{\bf Acknowledgements.} The author would like to thank Magdalena Rodríguez, José Miguel Manzano and Ana Menezes  for very useful discussions and suggestions on this topic and to Laurent Mazet for his help in Section 4.  This research is supported by Spanish MINECO--FEDER research project MTM2017-89677-P and by a FPU grant from the Spanish Ministry of Science and Innovation.

\section{The space $\PSL$}
Given $\kappa,\tau\in\R$, the space $\E(\kappa,\tau)$ is the unique
	simply connected oriented 3-manifold admitting a Riemannian submersion
	with constant bundle curvature $\tau$ over $\mathbb{M}^2(\kappa)$, the
	simply connected surface with constant curvature $\kappa$, whose
	fibers are the integral curves of a unit Killing vector field
	$\xi$. The bundle curvature can be characterized by the equation
	$\overline\nabla_X\xi=\tau X\times\xi$, where $\overline\nabla$ stands
	for the Levi-Civita connection and its sign depends on the
	orientation of $\E(\kappa,\tau)$ by means of the cross product
	$\times$, see \cite{Man} for details.

In this section we  describe the isometries of the space $\PSL$. We will assume that $\kappa=-1$ after rescaling the metric and define  
\[\PSL =\{(x,y,t)\in\R^3:\ \lambda(x,y)>0 \},\]
endowed with metric $g_\lambda=\lambda^2(dx^2+dy^2)+\left( 2\tau(\frac{\lambda_y}{\lambda}dx-\frac{\lambda_x}{\lambda}dy)+dt\right) ^2$, where

\begin{enumerate}
	\item If $\lambda(x,y)=\frac{1}{y}$, we have the half space model  $\mathcal H$. In this model we identify the asymptotic vertical  boundary  with $\left( \{y=0\}\cup\{\infty\}\right) \times\R\equiv \left( \R\cup \{\infty\}\right) \times\R$. 
	\item If $\lambda(x,y)=\frac{2}{1-x^2-y^2}$, we have the cylinder model $\mathcal C$, where we identify the asymptotic vertical boundary  with $\mathbb S^1\times\R$.
\end{enumerate}
In these models the Riemannian submersion reads as $\Pi(x,y,t)=(x,y)$ and the Killing vector field is $\xi=\partial_t$.
In the sequel, we  choose the complex coordinate $z=x+iy$.
\begin{proposition}\label{prop:iso}
	The maps $\phi:\mathcal H\to \mathcal C$ and  $\psi:\mathcal C\to \mathcal H$   given by:
	\begin{equation}\label{iso1}
	\phi(z,t)=\left( \frac{z-i}{z+i},t-4\tau\arctan\left( \frac{x}{y+1}\right) \right)
	\end{equation}
 and 
 \begin{equation}\label{iso2}
	\psi(z,t)=\left( \frac{i+ iz}{1-z},t -4\tau\arctan\left( \frac{y}{1-x}\right) \right), 
 \end{equation}
are isometries between the half space model and the cylinder model of $\PSL$.
\end{proposition}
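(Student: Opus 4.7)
My plan is to verify the proposition by direct computation, keeping the geometric structure of a Killing submersion visible. The content splits into three claims: $\phi$ and $\psi$ are mutually inverse diffeomorphisms of $\R^3$; the projection $\Pi(x,y,t) = (x,y)$ intertwines $\phi$ with the M\"obius map $f(z) = (z-i)/(z+i)$ from the upper half plane to the disc; and $\phi$ preserves the connection $1$-form of the underlying $\R$-bundle structure. Together with the standard conformal isometry on the base, this last condition gives $\phi^* g_{\mathcal{C}} = g_{\mathcal{H}}$, and the analogous statements for $\psi$ follow at once.

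For the mutual-inverse claim, the base maps $z \mapsto (z-i)/(z+i)$ and $w \mapsto i(1+w)/(1-w)$ are manifestly inverse M\"obius transformations, so only the $t$-correction needs checking: a short calculation gives $f(z) = ((x^2+y^2-1) - 2ix)/(x^2+(y+1)^2)$, from which $y'/(1-x') = -x/(y+1)$, and hence $\arctan(y'/(1-x')) = -\arctan(x/(y+1))$, so the two correction terms cancel under composition. For the isometry part, I read off the connection $1$-forms from $g_\lambda = \lambda^2|dz|^2 + \omega^2$,
\[
\omega_{\mathcal{H}} = dt - \tfrac{2\tau}{y}\, dx, \qquad \omega_{\mathcal{C}} = dt + \tfrac{4\tau(y\, dx - x\, dy)}{1 - x^2 - y^2},
\]
and use that $f^*(\lambda_{\mathcal{C}}^2|dw|^2) = \lambda_{\mathcal{H}}^2|dz|^2$ is the classical conformal isometry between the two hyperbolic models, which handles the horizontal part of the metric. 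Since the $t$-component of $\phi$ is $t \mapsto t - 4\tau \arctan(x/(y+1))$, the remaining identity $\phi^*\omega_{\mathcal{C}} = \omega_{\mathcal{H}}$ reduces to the planar identity
\[
f^*\!\left(\tfrac{4\tau(y\, dx - x\, dy)}{1 - x^2 - y^2}\right) + \tfrac{2\tau}{y}\, dx = 4\tau\, d\!\arctan\!\left(\tfrac{x}{y+1}\right).
\]

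Conceptually, both sides are $1$-forms on the upper half plane whose exterior derivatives equal the bundle curvature $-2\tau\, d\vol_{\h^2}$, so they can differ only by an exact $1$-form, and the $\arctan$ correction is precisely the primitive needed. Concretely, one uses $1 - |f(z)|^2 = 4y/(x^2+(y+1)^2)$ to pull $(y'\, dx' - x'\, dy')/(1-|w|^2)$ back into $z$-coordinates and then collects coefficients of $dx$ and $dy$. The main obstacle is the bookkeeping in this pullback, in particular tracking the signs and the factors of $2\tau$ versus $4\tau$ that distinguish the two models; conceptually nothing delicate happens, because by the general theory of Killing submersions any base isometry preserving the area form admits a vertical lift to an isometry upstairs, and the formulas for $\phi$ and $\psi$ merely record a particular lift compatible with the chosen trivializations.
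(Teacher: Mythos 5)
Your argument is correct, and it is worth noting that the paper offers no proof of this proposition at all: it is stated as a known fact, in effect the Cayley-transform analogue of the lift formula \eqref{isometrias} quoted later from \cite{Penafiel,Younes} (for $f(z)=\tfrac{z-i}{z+i}$ one has $f'(z)=\tfrac{2i}{(z+i)^2}$, hence $\arg f'(z)=2\arctan\bigl(\tfrac{x}{y+1}\bigr)-\tfrac{\pi}{2}$, so $t-2\tau\arg f'(z)$ coincides with the correction in \eqref{iso1} up to a vertical translation). Your connection-form computation therefore supplies a self-contained verification rather than an alternative to an argument in the text, and its key points check out: writing $w=x'+iy'=f(z)$, one gets $x'=\tfrac{x^2+y^2-1}{x^2+(y+1)^2}$, $y'=\tfrac{-2x}{x^2+(y+1)^2}$, hence $1-|w|^2=\tfrac{4y}{x^2+(y+1)^2}$ and $\tfrac{y'}{1-x'}=-\tfrac{x}{y+1}$ (which gives $\psi\circ\phi=\mathrm{id}$ and, symmetrically, $\phi\circ\psi=\mathrm{id}$), and the pullback satisfies $f^*\bigl(\tfrac{4\tau(y'\,dx'-x'\,dy')}{1-x'^2-y'^2}\bigr)=\tfrac{4\tau\left((y+1)\,dx-x\,dy\right)}{x^2+(y+1)^2}-\tfrac{2\tau}{y}\,dx=4\tau\,d\arctan\bigl(\tfrac{x}{y+1}\bigr)-\tfrac{2\tau}{y}\,dx$, so indeed $\phi^*\omega_{\mathcal C}=\omega_{\mathcal H}$ and, combined with the standard Cayley isometry on the base, $\phi^*g_{\mathcal C}=g_{\mathcal H}$; note also that $y+1>0$ and $1-x'>0$, so both $\arctan$ corrections are globally smooth and no branch-of-argument issues arise. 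The only slight imprecision is your curvature remark: as displayed, both sides of your planar identity are closed (the $\tfrac{2\tau}{y}\,dx$ term cancels the curvature on the left, and the right-hand side is exact), so the conceptual argument only shows that the two horizontal $1$-forms differ by a closed, hence exact, form on the simply connected half-plane; identifying that primitive as $4\tau\arctan\bigl(\tfrac{x}{y+1}\bigr)$ still requires the coefficient bookkeeping you outline, which does go through as you set it up.
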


Note that these isometries extend to the asymptotic boundary. We show in Figure~\ref{Circle} how horizontal lines in the asymptotic boundary of $\PSL$ in the half space model are transformed by the isometry $\phi$, and also how circles in the asymptotic boundary of $\PSL$ in the cylinder model transform  by the isometry $\psi$.

\begin{figure}[htb]

\begin{center}
\includegraphics[height=6cm]{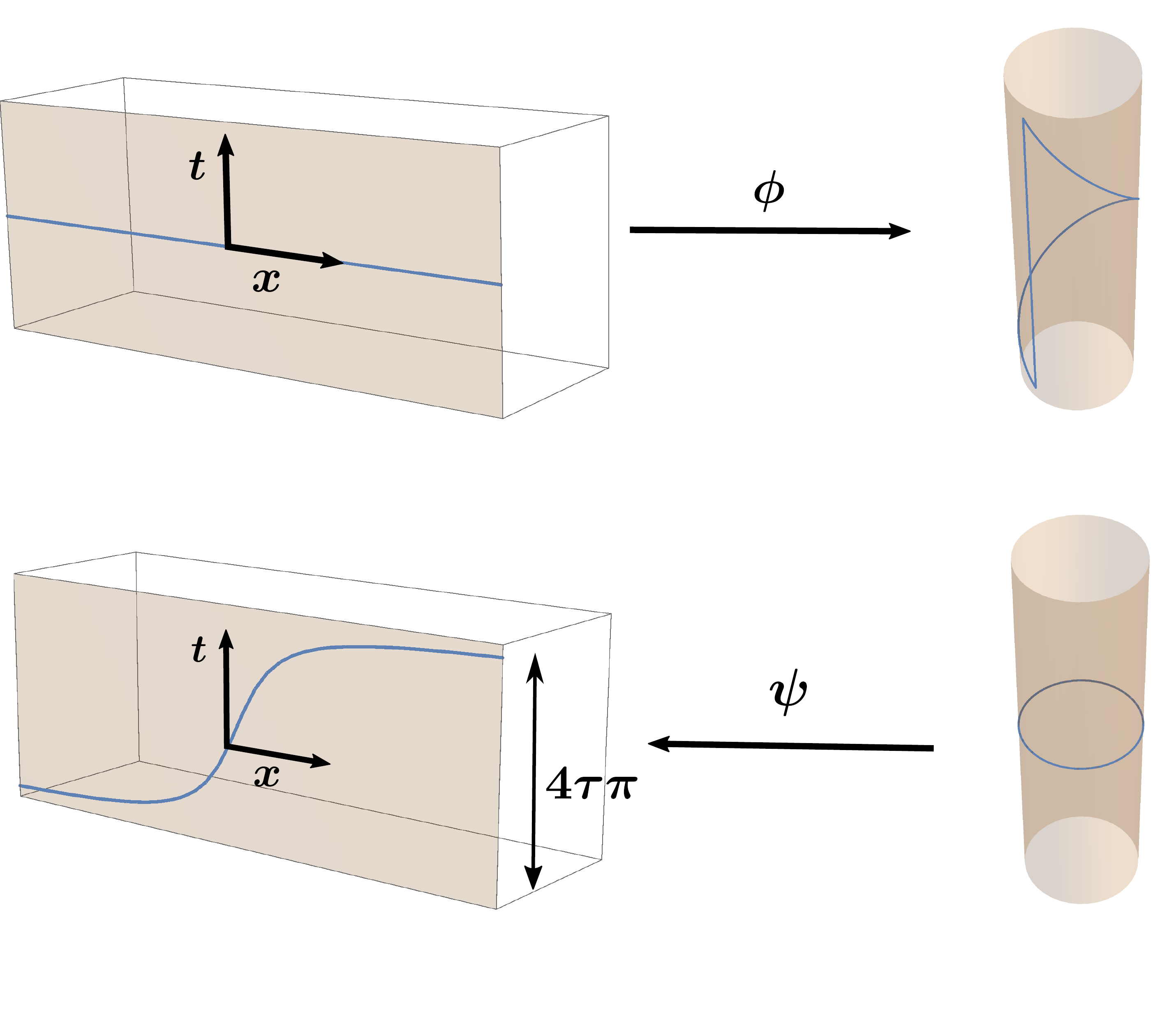}
\end{center}
	\caption{On the top, the image by $\phi$ of a horizontal straight line in the asymptotic boundary of the half space model of $\PSL$. On the bottom,  the image by $\psi$ of a circle in the asymptotic boundary of the cylinder model of $\PSL$.  }\label{Circle}
\end{figure}

\
\

Let $f:\h^2\to\h^2$, $f(z)=\frac{az +b}{cz +d}$ with $ad-bc=1$, $a,b,c,d\in \R$, be a positive isometry of $\h^2$ in the half space model. Then, $f$ can be lifted to an isometry  $\bar{f}$ of $\PSL$: 

\begin{equation}\label{isometrias}
\bar f(z,t)=\left( f(z),t-2\tau \text{arg}f'(z) \right) 
\end{equation}
satisfying $f\circ \Pi=\Pi \circ \bar f$, where $\Pi$ is the Riemannian submersion over $\h^2$. It is unique up to vertical translations. See for instance Proposition 2.1 in  \cite{Penafiel} and \cite{Younes}. Here, we give explicit expressions for the isometries in terms of the parameters $a, b, c, d\in \R$.

\begin{enumerate}
	\item If $c=0$ $(a d= 1)$, we have the isometries which fix $\infty$, and then the lifted isometry in $\PSL$ is given by the map: 
	\[\bar{f}(z,t)=\left( \frac{az+b}{d},t+t_0\right),  \]
	 for some $t_0\in \R$. Up to a vertical translation we can choose $t_0=0$. This isometry extends to the asymptotic boundary  as:
		\[\bar{f}(x,0,t)=\left( \frac{ax+b}{d},0,t \right).  \]

	\item If $c\neq 0$, we have the isometries  sending the ideal point $z=-\frac{d}{c}$ to $\infty$, given by:
	\begin{equation}\label{isometria}
	\bar{f}(x,y,t)=\left( \frac{(d+c x)(-1+a(d+cx))+ac^2y^2}{c(d+cx)^2+c^3y^2},\frac{y}{(d+cx)^2+c^2y^2},t-4\tau \arctan\left(\frac{cx+d}{c y} \right)+t_0 \right) 	 
\end{equation}

	 Choosing $t_0=0$, they extend to the asymptotic boundary  as 
	 \[ \bar{f}(x,0,t)=	  \left\lbrace
	 \begin{array}{cc}
	(\frac{ax+b}{cx+d},0,t+2\tau \pi)  &  x<\frac{-d}{c}, \\
	 
	 (\frac{ax+b}{cx+d},0,t-2\tau \pi) &  x>\frac{-d}{c}.
	 \end{array}                
	 \right. 	 \]
\end{enumerate}
Note that the extended  $\bar{f}$ is not a continuous function as a function from $\R^2$ to $\R^2$, since it has a jump discontinuity  at the ideal point $x_0=\frac{a}{c}.$  Figure~\ref{plano_isometria} shows the image of the minimal slice $\mathcal S$ of equation $\{t=0\}$ by an isometry which sends the ideal point $(0,0)$ to $\infty$. It can be parametrized as the entire vertical graph of the function $(x,y) \mapsto 4\tau \arctan(\frac{x}{y})$.  The asymptotic boundary here consists of two horizontal half straight lines along with a vertical segment of length $4\tau\pi$. 
	\begin{figure}[htb]
	\begin{center}
		\includegraphics[height=4cm]{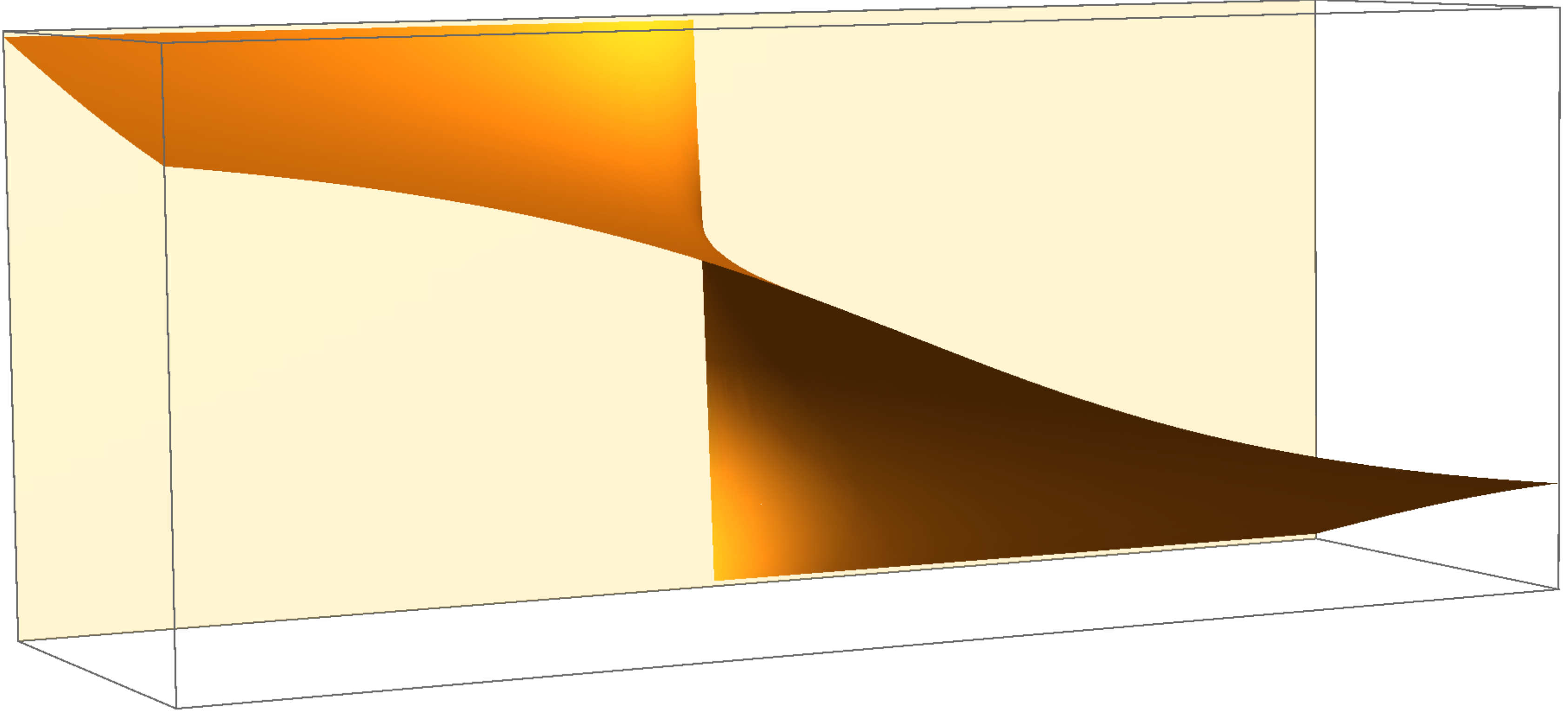} 
		
		\caption{The image of a horizontal plane by the isometry with parameters $a=0,\ b=-1,\ c=1$, $d=1$ and $\tau=1/2$,  in the half space model of $\PSL$.} \label{plano_isometria}
	\end{center}
\end{figure}

 We also  emphasize some  isometries of $\PSL$ that can be expressed in a simple way:
\begin{enumerate}
	\item Vertical translations: $(x,y,t)\mapsto (x,y,t+t_0).$
	
	\item Hyperbolic translations: In the half space model  hyperbolic translations along the horizontal geodesic $\{x=c\}$ is the map  given by the map \newline ${(x,y,t)\mapsto(c+\lambda (x-c),\lambda y,t)}$,  $\lambda>0$.
	
	\item Parabolic translations: In the half space model  parabolic translations along the horocycles $\{y=c^2\}$ are given by the map $(x,y,t)\mapsto ( x+a,y ,t)$,  $a\in\R$.

	\item Rotations: In the cylinder model  rotations with respect to the $t$-axis are Euclidean rotations with  respect to the $t$-axis.

\end{enumerate}

There are not horizontal surfaces in $\PSL$, that is, surfaces everywhere orthogonal to the unit Killing vector field.
The surface $\{t=0\}$ is the so-called minimal umbrella centered in $(0,0,0)$ in the cylinder model $\mathcal C$, i.e the union of all horizontal  geodesics through the point $(0,0,0)\in \mathcal C$. However, the minimal slice $\mathcal S$ of equation $\{t=0\}$ in the half space model is not an umbrella if $\tau\neq 0$, but it can be understood as an umbrella with center an ideal point.

\begin{proposition}
	The  minimal slice $\mathcal S$   is the limit surface of umbrellas under the action of a $1$-parameter group of hyperbolic  translations.
\end{proposition}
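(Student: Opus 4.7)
My plan is to exhibit an explicit 1-parameter family of umbrellas in the half space model $\mathcal{H}$ whose centers slide off to an ideal point along a horizontal geodesic, and to show that the corresponding family of surfaces converges to $\mathcal S = \{t=0\}$.

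As a starting point, I would take the umbrella centered at $(0,0,0)\in\mathcal{C}$, which is exactly the slice $\{t=0\}_{\mathcal C}$, and push it to $\mathcal{H}$ via the isometry $\psi$ of Proposition~\ref{prop:iso}. Since $\psi(0,0,0)=(0,1,0)$, this yields an umbrella $\mathcal U_0 \subset \mathcal H$ centered at $(0,1,0)$. Inverting the M\"obius map $u+iv=i(1+z)/(1-z)$ gives explicit formulas for $(x,y)$ as a function of $(u,v)$, from which a short computation reduces to $y/(1-x)=-u/(v+1)$. Hence $\mathcal U_0$ is the entire vertical graph of
\[ f(u,v) \;=\; 4\tau \arctan\!\left( \frac{u}{v+1} \right), \qquad v>0. \]

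Next, I would apply the 1-parameter group of hyperbolic translations $T_\lambda(u,v,t) = (\lambda u, \lambda v, t)$, $\lambda>0$, along the horizontal geodesic $\{x=0\}$, listed among the explicit isometries of $\PSL$ in the previous paragraphs. Since each $T_\lambda$ is an isometry of $\PSL$, $T_\lambda(\mathcal U_0)$ is again an umbrella, now centered at $T_\lambda(0,1,0)=(0,\lambda,0)$, and it is the entire vertical graph of
\[ f_\lambda(u,v) \;=\; f(u/\lambda,\,v/\lambda) \;=\; 4\tau \arctan\!\left( \frac{u}{v+\lambda} \right). \]

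Finally, as $\lambda\to+\infty$ the centers $(0,\lambda,0)$ tend to the ideal point $\infty\in\partial_\infty\h^2$, while $f_\lambda\to 0$ uniformly on compact subsets of $\h^2$ (in fact in the $\mathcal C^\infty$ topology, since the argument of the arctangent converges smoothly to $0$ together with all derivatives on each compact set). Therefore the family $\{T_\lambda(\mathcal U_0)\}_{\lambda>0}$ of umbrellas converges smoothly on compacta to the minimal slice $\{t=0\}=\mathcal S$, proving the proposition. I do not foresee a real obstacle here: the only non-trivial step is identifying the correct 1-parameter family, and once the graph equation of $\mathcal U_0$ is written down, the convergence is immediate.
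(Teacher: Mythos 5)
Your proposal is correct and follows essentially the same route as the paper: you transport the umbrella centered at the origin of the cylinder model to the half space model via $\psi$, obtaining the graph of $4\tau\arctan\bigl(\tfrac{x}{y+1}\bigr)$, apply the hyperbolic translations $(x,y,t)\mapsto(\lambda x,\lambda y,t)$ to get the graphs of $4\tau\arctan\bigl(\tfrac{x}{y+\lambda}\bigr)$, and let $\lambda\to+\infty$ to recover $\mathcal S$. The only difference is cosmetic: the paper additionally records the limit $\lambda\to 0$, which gives the image of $\mathcal S$ under the isometry sending $(0,0)$ to $\infty$, but this is not needed for the statement.
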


\begin{proof}
 Consider the umbrella $\{t=0\}$ in the cylinder model parametrized as:
	\[X(x,y)=\left( \frac{-1+x^2+y^2}{x^2+(1+y)^2},\frac{-2x}{x^2+(1+y)^2},0\right) ,\  x\in\R\  \text{and}\  y>0.
	\]
	 The image by the isometry \eqref{iso2} is the surface $\left( \psi\circ X\right) (x,y)=(x,y,4\tau\arctan(\frac{x}{y+1}) )$. The hyperbolic translation $(x,y,t)\mapsto(\lambda x,\lambda y,t)$ applied to $\psi\circ X$ gives the surface reparametrized by $Y_\lambda(x,y)=( x,y,4\tau\arctan( \frac{x}{y+\lambda}) ) $, $x\in \R$, $y>0$. Then the limits of $Y_\lambda$ when $\lambda\to 0$ and $\lambda\to +\infty$ are respectively: $Y_0(x,y)=(x,y,4\tau\arctan(\frac{x}{y}))$ and $Y_{\infty}(x,y)=(x,y,0)$.
	$Y_0$ is the image by the isometry which sends $(0,0)$ to $\infty$ of the  minimal slice $\mathcal S$, see Figure~\ref{plano_isometria}, and $Y_\infty$ is the minimal slice $\mathcal S$.	
\end{proof}

\section{ Invariant minimal surfaces}
In this section we will describe some invariant minimal surfaces giving explicit expressions of them  as  vertical graphs or  union of graphs of functions ${u:\Omega\subset\h^2\to \R}$ in the half space model. These surfaces have been also described in \cite{FP} and \cite{Penafiel}. 

A (vertical) graph in $\E(-1,\tau)$ is a section of the submersion
$\Pi:\E(-1,\tau)\to\mathbb \h^2$ defined over some domain
$\Omega\subset\h^2$. The mean curvature of the graph of a
function $u\in C^2(\Omega)$ over a zero section
$F_0:\Omega\to\E(-1,\tau)$ can be expressed in divergence form as
\begin{equation}
2H=\mathrm{div}\frac{Gu}{\sqrt{1+\|Gu\|^2}},
\end{equation}
 where the divergence
and norm are computed with respect to the metric of
$\h^2$ and $Gu$ is a vector field in
$\h^2$ called the generalized gradient.
We take as zero section $F_0(x,y)=(x,y,0)$, so the graph is
parametrized in terms of $u$ as
\begin{equation}\label{eqn:graph-parametrization}
F_u(x,y)=(x,y,u(x,y)),\qquad (x,y)\in\Omega,
\end{equation}
and hence $Gu=(u_x+\tau y\lambda^{-1})\partial_x+(u_y-\tau
x\lambda^{-1})\partial_y$.

Therefore, the minimal surface equation for a graph $F_u$ in the half space model is: 
\begin{equation}\label{minimaleq}
y u_y^3-(1+4\tau^2+yu_x(-4\tau +yu_x))u_{yy}+u_y(-2\tau+ yu_x)(u_x+2y u_{xy})-u_{xx}-y^2u_y^2u_{xx}=0.
\end{equation}
We  can solve this equation in some particular cases, obtaining symmetric minimal surfaces. In all the cases we choose the additive constant of the vertical translation as $0$. 
\begin{enumerate}
	\item  If $u(x,y)=u(y)$, then we can solve  Equation \eqref{minimaleq}  obtaining a 1-parameter family of complete minimal  bigraphs, given by the union of the two solutions $u_d^\pm(x,y)=\pm \sqrt{1+4\tau^2}\arcsin(d y)$, as shown in Figure \ref{Benoit}. The asymptotic boundary of these surfaces consists of two horizontal lines at distance $\sqrt{1+4\tau^2}\pi$ from each other. The parameter $d$ reflects a hyperbolic translation of the surface. The limit surface when $d\to +\infty$ consists of two slices, and the limit when $d\to0$ is the open subset of the ideal boundary given by $\{(x,0,t),\ x\in \R,\ 0<t<\sqrt{1+4\tau^2}\pi\}$.
\begin{figure}[htb]
\includegraphics[height=5cm]{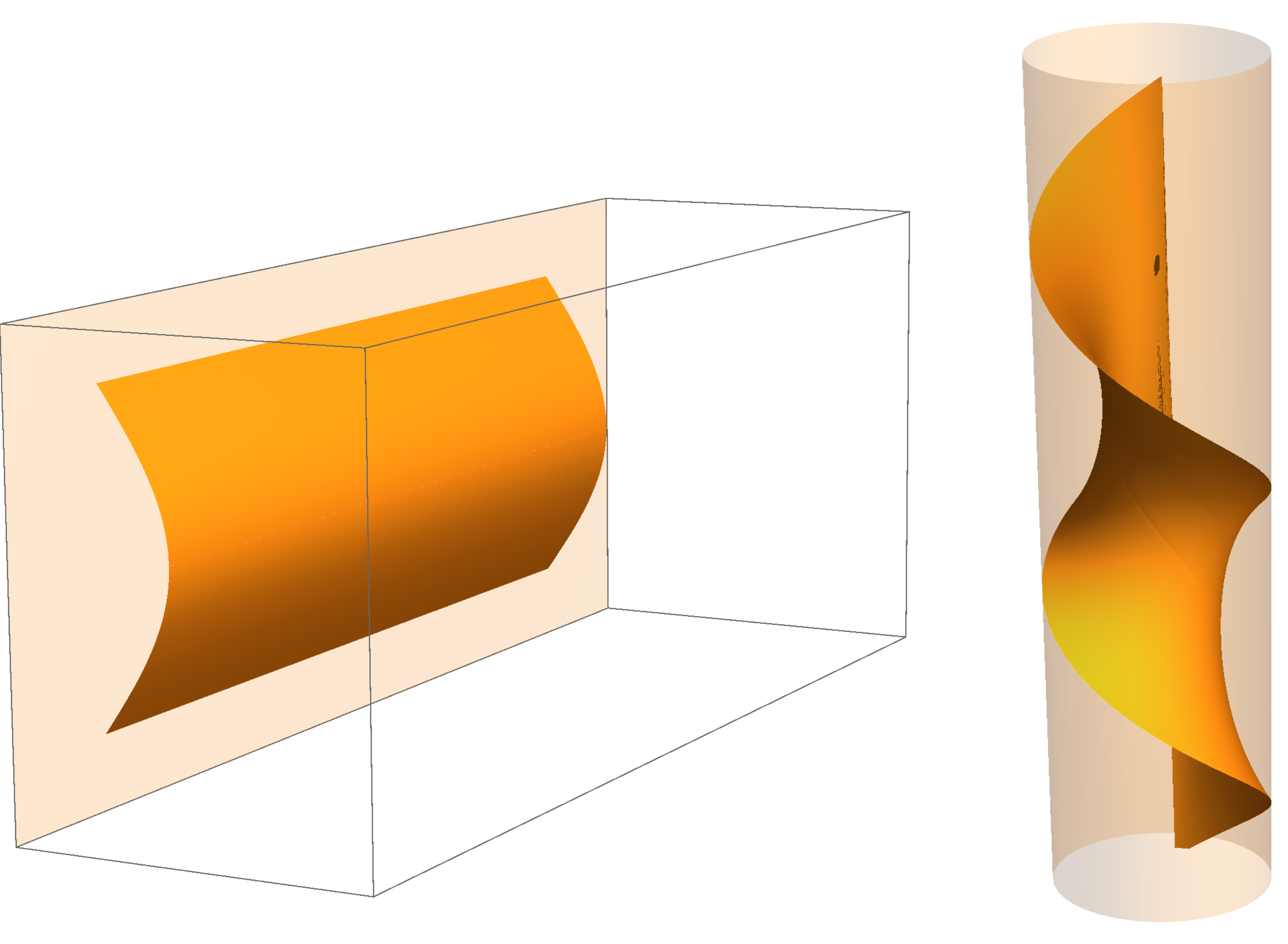} 

\caption{The surface $ u_{d }^\pm$ with $d=1$ and $\tau=1/2$ in the half space model (left) and the cylinder model (right).}\label{Benoit}
\end{figure}

\item If  $u(x,y)=u(y)+l x$ for some $l\in\R$,   Equation \eqref{minimaleq} reduces to:
\begin{equation}
yu'(y)^3-(1+(ly-2\tau)^2)u''(y)+ l(l y-2\tau)u'(y)=0.
\end{equation}

We can solve this equation obtaining the tilted planes $u(x,y)=lx$, as well as  a 2-parameter family of complete minimal surfaces $v_{dl}^\pm(x,y)=v_d^\pm(y)+lx$ where
 \[v_d^\pm(y)=\pm \displaystyle\int_0^y\dfrac{\sqrt{1+(lt-2\tau)^2}}{\sqrt{d^2-t^2}}dt,\  0<y<d.\]
   These surfaces are obtained as  the union of the two solutions corresponding to the choice of the sign $\pm$, see Figure~\ref{TiltedBenoit}. Up to vertical translations, their asymptotic boundary  consists of the two lines of equation $t=lx$ and $t=lx+2v_d^+(d)$ lying in the plane $y=0$. We have the following estimate for $v_d^+(d)$:
\[ v_d^+(d)=\displaystyle\int_0^d\dfrac{\sqrt{1+(lt-2\tau)^2}}{\sqrt{d^2-t^2}}dt\geq \displaystyle\int_0^d\dfrac{1}{\sqrt{d^2-t^2}}dt=\frac{\pi}{2}. \]
 We have  additional information in the following cases:
 
 \begin{enumerate}
 	
 	\item If $l\tau\leq0$, then: 

\begin{align*}
 	v_d^+(d) & =\displaystyle\int_0^d\dfrac{\sqrt{1+(lt-2\tau)^2}}{\sqrt{d^2-t^2}}dt\geq\displaystyle\int_0^d\dfrac{\sqrt{1+(2\tau)^2}}{\sqrt{d^2-t^2}}dt=\frac{\sqrt{1+4\tau^2}\pi}{2},
 	\\
 		v_d^+(d)& =\displaystyle\int_0^d\dfrac{\sqrt{1+(lt-2\tau)^2}}{\sqrt{d^2-t^2}}dt\leq \displaystyle\int_0^d\dfrac{\sqrt{1+(ld-2\tau)^2}}{\sqrt{d^2-t^2}}dt=\frac{\sqrt{1+(ld-2\tau)^2}\pi}{2}.
\end{align*}

 \item  If $l\tau>0$, then:
 \begin{itemize}
  \item If $0<l\tau<\frac{4\tau^2}{d}$ then as $0<t<d$ we have that $l \tau<\frac{4\tau^2}{d}<\frac{4\tau^2}{t}$, whence   $l^2t^2-4lt\tau<0$ and consequently $(l t -2\tau)^2< +4\tau^2$. This gives: 
  \[v_d^+(d)=\displaystyle\int_0^d\dfrac{\sqrt{1+(lt-2\tau)^2}}{\sqrt{d^2-t^2}}dt\leq\displaystyle\int_0^d\dfrac{\sqrt{1+4\tau^2}}{\sqrt{d^2-t^2}}dt=\frac{\sqrt{1+4\tau^2}\pi}{2}.\]
  \item If  $|l d|<2|\tau|$, then $(lt-2\tau)^2>(ld-2\tau)^2$, and consequently:
  	 	\[v_d^+(d)=\displaystyle\int_0^d\dfrac{\sqrt{1+(lt-2\tau)^2}}{\sqrt{d^2-t^2}}dt\geq \displaystyle\int_0^d\dfrac{\sqrt{1+(ld-2\tau)^2}}{\sqrt{d^2-t^2}}dt=\frac{\sqrt{1+(ld-2\tau)^2}\pi}{2}.
  	 \]
  	
 \end{itemize} 	 

 \end{enumerate} 
Fix $l\in \R$. The limit surface when $d\to 0$ is the region of the asymptotic boundary   $\{ (x,0,t):  x<t<lx+ d_0 \}$ with $d_0=\sqrt{1+4\tau^2}\pi $. Note that we have  opposite inequalities depending on the sign of $\tau$, this can be understood  by considering the isometry between the half space models of $\mathbb E(-1,\tau)$ and $\mathbb E(-1,-\tau)$ given by the map $(x,y,t)\mapsto (x,y,-t)$. 
\begin{remark}
	Item (b) shows that there exist examples of minimal surfaces such that the  height of the asymptotic curve is less than $\sqrt{1+4\tau^2}\pi$ at every point. Theorem \ref{minimizing} shows that when the height is less than $\sqrt{1+4\tau^2}\pi$ these examples are not area minimizing. Also these surfaces are contained between two entire minimal   graphs with unbounded height in the half space model separated by a distance less than $\sqrt{1+4\tau^2}\pi$. This shows that the hypothesis of being a bounded graph in  Theorem \ref{t:slab} is necessary.  
	
\end{remark}
\begin{figure}[htb]
	\includegraphics[height=5cm]{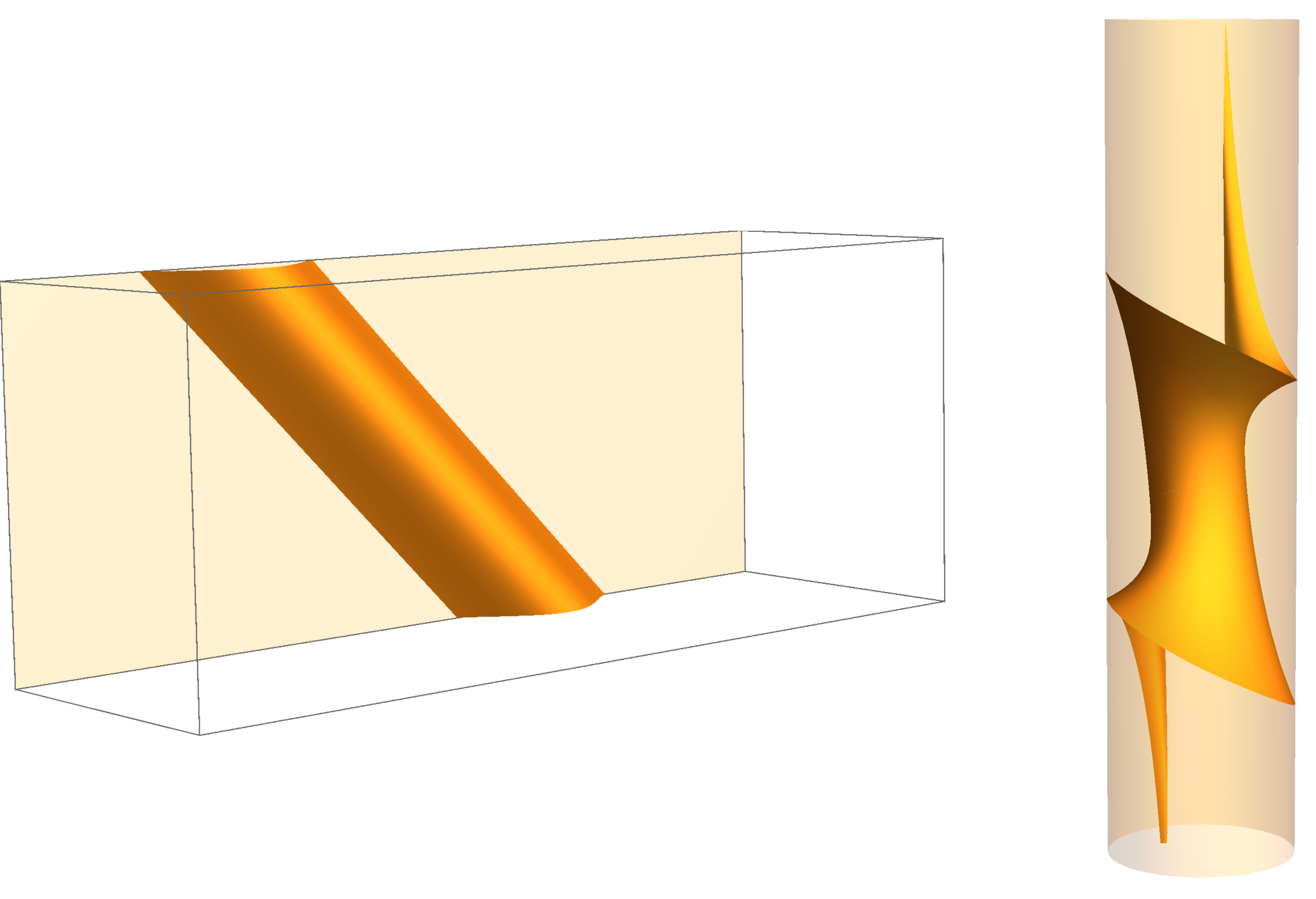} 
	\caption{The surface $ v_{dl }^\pm$ with $d=1$, $l=1$ and $\tau=1/2$ in the half space model (left) and in the cylinder model (right).}\label{TiltedBenoit}
\end{figure}

\item If $u(x,y)=u( \frac{x}{y})$ and we write $s=\frac{x}{y}$, then  Equation \eqref{minimaleq} reduces to:
\begin{equation}\label{eq:tall}
2s(1+4\tau^2)u'(s)-6s\tau u'(s)^2+(s^3+s)u'(s)^3+(1+s^2(1+4\tau^2))u''(s)=0.        
\end{equation}

The solutions of  Equation \eqref{eq:tall} are given by: 
\[u_c^\pm(s)= 2\tau \arctan(s)\pm \displaystyle\int_{c_0}^s     \frac{\sqrt{1+(1+4\tau^2)t^2}}{(t^2+1)\sqrt{c(t^2+1)-1}}dt,\  s\in I(c)                                   ,\] 
 
 where $I(c)\subset \R$ is an interval that depends on $c$. We have three subcases:
\begin{itemize}[]
\item If $c>1$, then $I(c)=\R$ and we have entire graphs whose asymptotic boundary consists of two straight lines joined by a vertical segment contained in $\{(0,0)\}\times\R$. Observe that, when $c=\frac{1+4\tau^2}{4\tau^2}$ we obtain, up to vertical translations, the solutions $u_c^-(s)=0$ and $u_c^+(s)=4\tau\arctan(s)$, i.e., a minimal slice $\mathcal S$ and its image by the isometry which sends the point $(0,0)$ to $\infty$ respectively, see Figure~\ref{plano_isometria}. We have also the limit case when $c\to\infty$, $u_\infty(s)=2\tau \arctan(s)$, which is the surface composed by the geodesic $\{(0,y,0):\ y>0\}$ and all the geodesics orthogonal  to this one. 
\item If $c=1$, then $I(c)=\R^+$ and we have the two solutions:
	\[u_1^\pm(s)= 2\tau \arctan(s)\pm \displaystyle\int_{1}^s     \frac{\sqrt{1+(1+4\tau^2)t^2}}{(t^2+1)t}dt\ s>0 .\]
	The asymptotic boundary of the surface $u_1^+$ (up to vertical translation) consists of the line $\{(x,0,0):x>0\}$  and the vertical line ${\{(0,0,t): t<0 \}}$ in the vertical asymptotic boundary $\partial_\infty\h^2\times\R$, and the horizontal geodesic $\{(0,y,-\infty):\ y>0\}$ in the horizontal asymptotic boundary $\h^2\times\{-\infty\}$, see Figure~\ref{casoc1}. The asymptotic boundary of the surface $u_1^-$ is analogous, with the horizontal geodesic $\{(0,y,+\infty): y>0\}$ in the horizontal asymptotic boundary $\h^2\times\{+\infty\}$.
\begin{figure}[htb]
	\includegraphics[height=5cm]{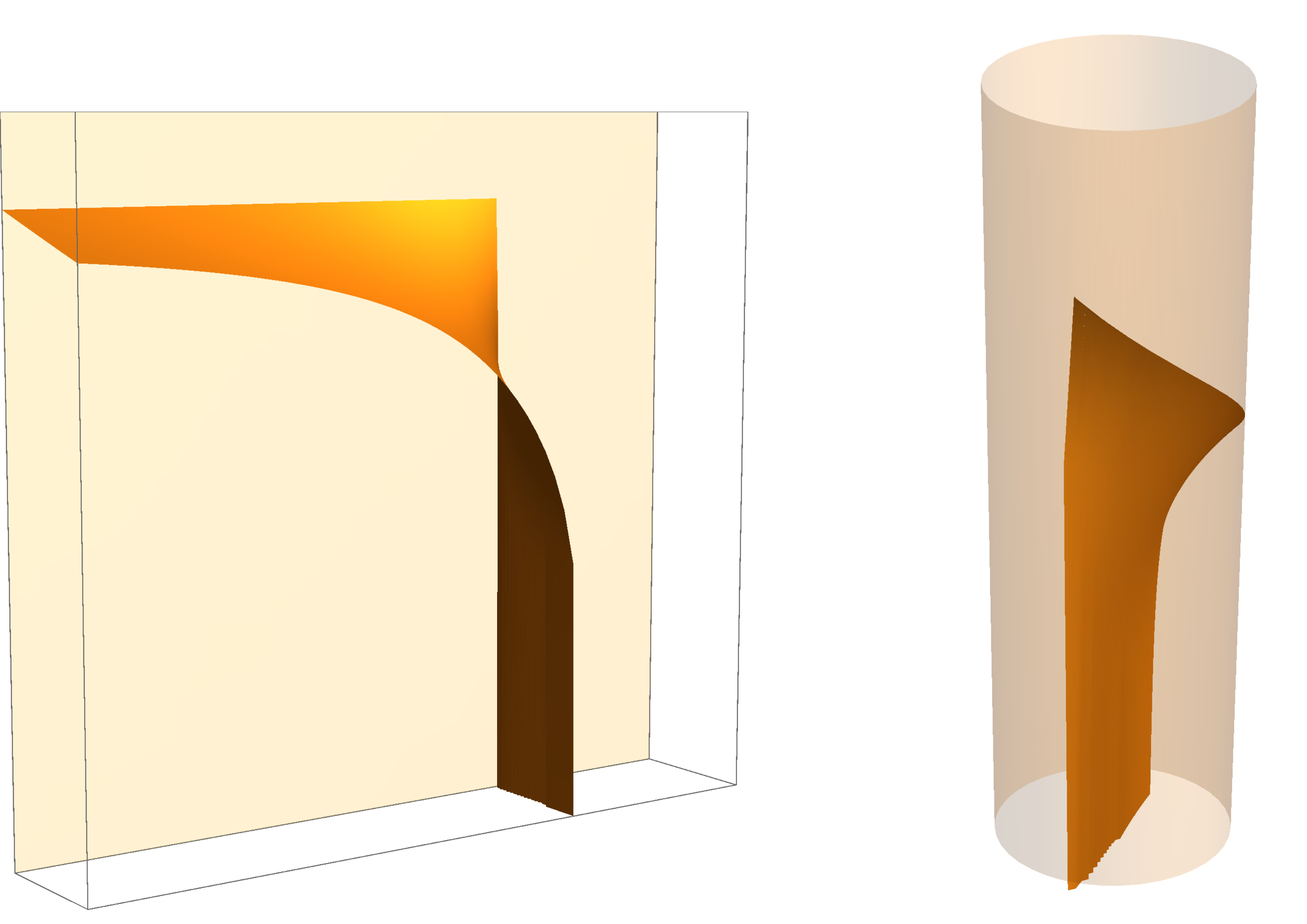} 

	\caption{The surface $u_1^+$ for $\tau=\frac{1}{2}$ and $c=1$ in the half space model (left) and  in the cylinder model   (right).}\label{casoc1}
\end{figure}

\item If $0<c<1$, then $I(c)=({\sqrt{\frac{1-c}{c}}},+\infty)$ and the solution is given by:

	\[u_c^\pm(s)= 2\tau \arctan(s)\pm \displaystyle\int_{\sqrt{\frac{1-c}{c}}}^s  \frac{\sqrt{1+(1+4\tau^2)t^2}}{(t^2+1)\sqrt{c(t^2+1)-1}}dt,\]
	 where  $s> {\sqrt{\frac{1-c}{c}}}.$
	We have that $(u_c^\pm)'(\sqrt{\frac{1-c}{c}})=\pm\infty$, and we can complete the surface as the union of the two solutions, see Figure~\ref{TallRec}. The asymptotic boundary, up to vertical translations, are the lines $\{(x,0, u_c^\pm(+\infty)):\ x>0 \}$ and the vertical segment $\{ (0,0,t):\  u_c^-(+\infty)<t< u_c^+(+\infty)   \} $. Observe that, the asymptotic boundary of these surfaces can be sent by an appropriate isometry to a rectangle. This surfaces are known in the literature as tall rectangles, since their asymptotic boundaries are  rectangles of height $h>\sqrt{1+4\tau^2}\pi$, see \cite{ FP,Lima}.

 \begin{figure}[htb]
	\includegraphics[height=4cm]{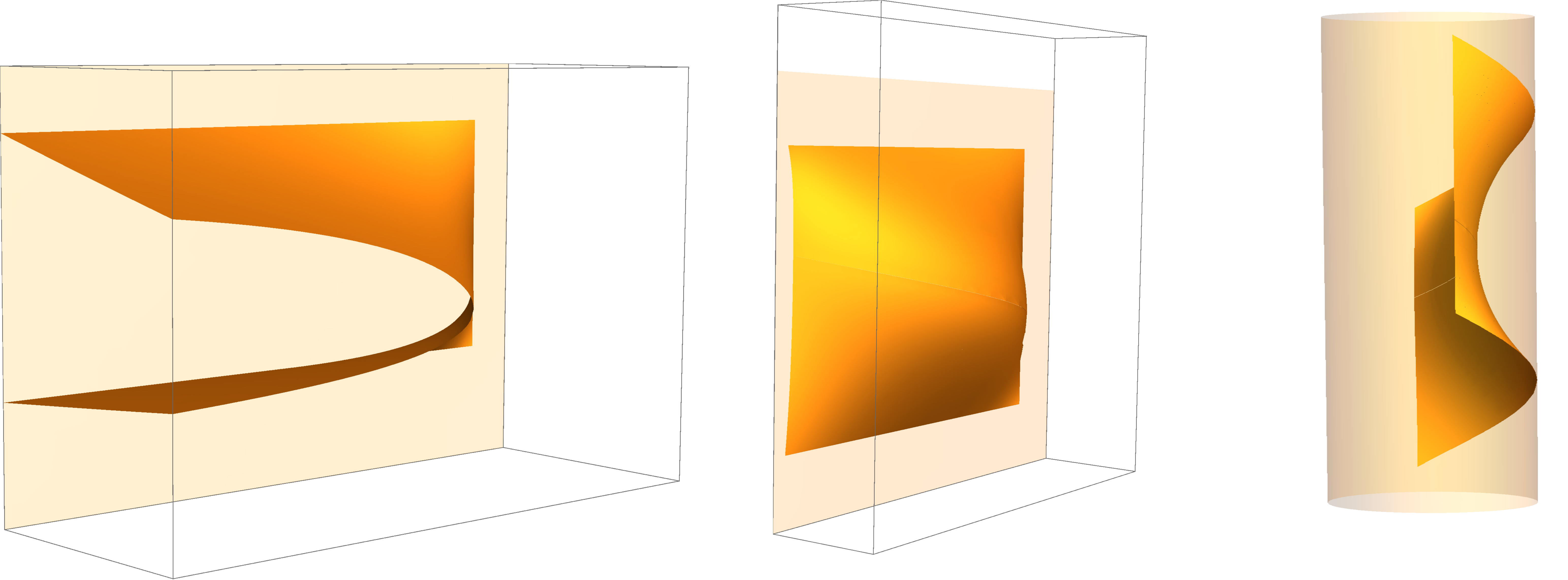} 
	
	\caption{The tall rectangle for $\tau=\frac{1}{2}$ and $c=\frac{1}{2}$ and an isometric copy with asymptotic boundary a rectangle in the half space model (left and center) and the tall rectangle   in the cylinder model   (right).}\label{TallRec}
\end{figure}

\end{itemize}

\item If $u(x,y)=h\left( r\right)+ 4 \tau \arctan\left(\frac{x}{y+1} \right)$, where $r=\tfrac{(y-1)^2+x^2}{x^2+(y+1)^2}$, then  Equation  \eqref{minimaleq} reduces to:
\begin{equation}\label{eq:catenoid}
(1+2\tau^2r)h'(r)+\tfrac{1}{2}(r-3)(r-1)rh'(r)^3+(1+4\tau^2r)rh''(r)=0.        
\end{equation}
 The trivial  solution $h\equiv 0$  corresponds to $u(x,y)=4\tau \arctan\left(\frac{x}{y+1} \right)  $, which is the umbrella centered at $(0,1,0)$. The rest of the solutions are  given by 
\[h_c^\pm(r)=\pm \displaystyle\int_{r_0(c)}^{r}\frac{\sqrt{1+4t\tau^2}}{\sqrt{t(-3+t^2+c t-4t \log(t))}}dt,\  r\in(r_0(c),1),\]
where $r_0(c)$ is the unique real solution of  $-3+t^2+c t-4t \log(t)=0$ in $(0,1)$.
Note that $(h_c^\pm)'  (r_0(c))=\pm\infty$, so we can complete the surface as the union of the two solutions.
  These surfaces are known in the literature as catenoids, see \cite{Penafiel}. The asymptotic boundary of these surfaces consists of the curves $\{x,0,4\tau\arctan(x)+h^+(1) \}$ and $\{x,0,4\tau\arctan(x)-h^+(1) \}$, where $h^+(1)<\sqrt{1+4\tau^2}\frac{\pi}{2}$.
 
 \begin{figure}[htb]
 	\begin{center}
 	\includegraphics[height=4.5cm]{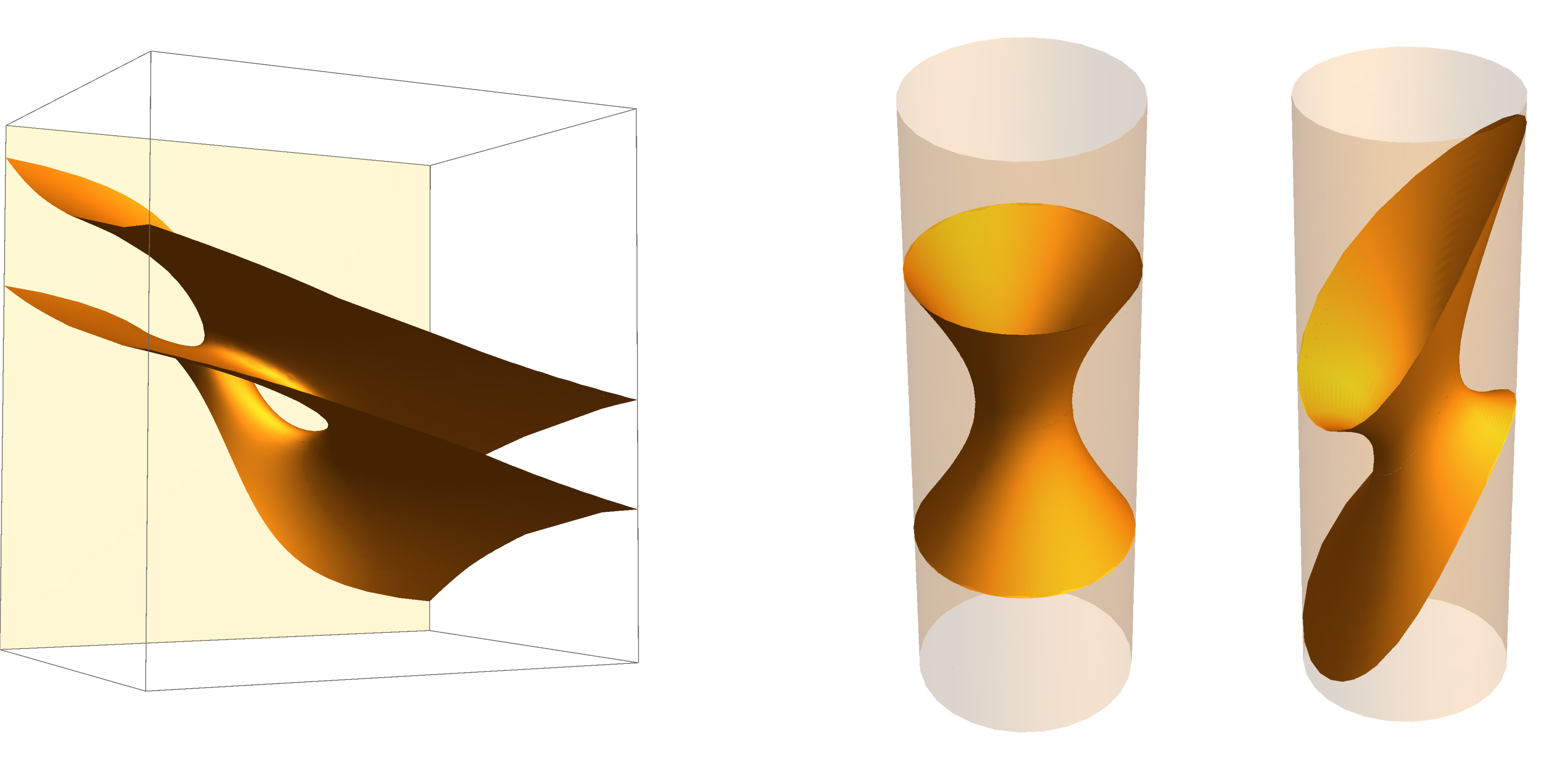} 
 \end{center}
 	\caption{The catenoid for $\tau=1/2$ and $c=10$ in the half space model (left) and the catenoid centered in the origin and a hyperbolically translated copy of it (right).}\label{Catenoids}
 \end{figure}

\end{enumerate}

\section{Minimal annuli }

In this section we will use Douglas criterium in order to prove that there exist minimal annuli with boundary two circles in parallel slices $\{t=0\}$ and $\{t=h\}$ centered at $(0,1,0)$ and $(0,1,h)$ with hyperbolic radius large enough in the half space model of $\PSL$ when $h<\sqrt{1+4\tau^2}\pi$. We also give a non existence result for these annuli when $h>\sqrt{1+4\tau^2}\pi$. We observe that these annuli are not the intersection of rotational catenoid and a slab of height $h$.

\begin{proposition}\label{annulus}
	If $h<\sqrt{1+4\tau^2}\pi$, then there exists a  compact area minimizing (minimal) annulus $A_h$ with boundary two curves contained in parallel horizontal minimal slices  separated by vertical height $h$ in the half space model of $\PSL$. 
\end{proposition}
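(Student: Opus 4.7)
The plan is to apply the classical Douglas criterion for annular Plateau problems: given the two disjoint boundary circles $\Gamma_R^1\subset\{t=0\}$ and $\Gamma_R^2\subset\{t=h\}$ (hyperbolic circles of radius $R$ centered over $(0,1)\in\h^2$ in the respective slices), it suffices to exhibit any annular competitor surface with boundary $\Gamma_R^1\cup\Gamma_R^2$ whose area is strictly less than the sum of the infima of areas of disks bounding $\Gamma_R^1$ and $\Gamma_R^2$ separately. I would aim to establish this strict inequality for $R$ sufficiently large; the area-minimizing annulus then exists and realizes the infimum.

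For the lower bound on disk minimizers, I would work with the intrinsic geometry of the slice. The induced metric on $\{t=0\}$ in the half-space model equals $y^{-2}((1+4\tau^2)\,dx^2+dy^2)$, isometric to a hyperbolic plane of curvature $-1/(1+4\tau^2)$, and the intrinsic disk bounded by $\Gamma_R^i$ has area of order $\cosh(R/\sqrt{1+4\tau^2})$ for large $R$. A calibration argument exploiting that the slice is minimal, or a direct monotonicity comparison in the ambient, should give a lower bound of the same exponential order on any disk in $\PSL$ bounded by $\Gamma_R^i$.

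For the upper bound, a naive vertical cylinder over $\Gamma_R^1$ has area $h\cdot L_{\h^2}(\Pi(\Gamma_R^1))$ (a short computation using that $\partial_t$ is a unit Killing field and $\Pi$ is a Riemannian submersion), which grows exponentially in $R$ with the same leading order as the disks and yields a threshold for $h$ strictly less than $\sqrt{1+4\tau^2}\pi$. To reach the sharp bound I would instead build the competitor from a truncation of the bigraph $u_d^\pm(x,y)=\pm\sqrt{1+4\tau^2}\arcsin(dy)$ of Section~3, whose total vertical extent is precisely $\sqrt{1+4\tau^2}\pi$. Vertically translating this bigraph so it is centered at $t=h/2$, the strict inequality $h<\sqrt{1+4\tau^2}\pi$ makes the bigraph meet both slices transversally along horocyclic rays $\{y=y_\ast\}$. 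Truncating the bigraph in $x$ and grafting thin strips inside the two slices produces an annular competitor with boundary $\Gamma_R^1\cup\Gamma_R^2$. Hyperbolic translations in the half-space model are essential here, since they preserve the $t$-coordinate (unlike in the cylinder model) and can align the bigraph's horocyclic boundary curves with the prescribed circles.

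The hard part will be estimating the capping strips on the slices. A priori they could themselves grow exponentially in $R$, potentially cancelling the advantage gained from using the bigraph. I would tune the bigraph parameter $d$ and the truncation width jointly with the hyperbolic translation to guarantee that the capping contribution is dominated by a strictly smaller multiple of $\cosh(R/\sqrt{1+4\tau^2})$ than the disk minimizers. Once this balance is achieved, the Douglas strict inequality follows for all $R$ large enough, yielding the desired annulus $A_h$. Sharpness of the threshold $h<\sqrt{1+4\tau^2}\pi$ is transparent from the construction, as it equals the bigraph's vertical extent.
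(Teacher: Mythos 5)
Your overall framework is the same as the paper's: verify the Douglas criterion for the two circles, using that the slice disks are the least-area disk solutions (the paper gets this from the maximum principle; your calibration remark is fine in spirit, though two of your quantitative asides are off: the induced metric $y^{-2}((1+4\tau^2)dx^2+dy^2)$ on a slice becomes the standard curvature $-1$ metric after rescaling only the $x$-coordinate, and the slice disk over the base disk of radius $R$ has area $2\pi\sqrt{1+4\tau^2}(\cosh R-1)$, i.e.\ of order $\cosh R$, not $\cosh(R/\sqrt{1+4\tau^2})$). Where you genuinely diverge from the paper is the competitor: the paper builds a rotational-type bigraph annulus $Y(r,\theta)=(r,\theta,\pm u(r)+v(\theta))$ over the annulus $\bar\rho<r<\rho$ in the cylinder model, with $u=\sqrt{1+4\tau^2}\,U$ the $\h^2\times\R$ catenoid height and the phase $v(\theta)$ chosen so that the boundary lies \emph{exactly} on the two prescribed circles in the minimal slices; the whole proof then reduces to one clean area estimate with $\rho=\tfrac54\bar\rho$, and the height tends to $\sqrt{1+4\tau^2}\pi$ as $\bar\rho\to\infty$. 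No grafting or capping is needed.

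The step you yourself flag as ``the hard part'' is, in my view, a genuine gap rather than a technicality, and the obstruction is quantitative. Whatever cut-and-paste assembly you choose, the parabolic bigraph meets the slices only along horocycles $\{y=y_*\}$, so the competitor must contain interface pieces (strips in the slices, caps over the circle or over $\{x=\pm X\}$, or vertical strips over $\{y=1/d\}$) whose area is at least of order $h\times(\text{hyperbolic length of the interface})$. For circles of radius $R$, the part of the circle lying above a fixed horocycle has length of order $e^{R/2}$, which is exactly the order of the total Douglas margin $2\,\mathrm{Area}(D)-(\text{competitor area})$ available from the bigraph neck: writing $c_h=\cos\bigl(h/(2\sqrt{1+4\tau^2})\bigr)$, the area saved per unit $x$ by replacing the two slice pieces over the strip $\{c_h/d\le y\le 1/d\}$ by the arch is $2\sqrt{1+4\tau^2}\bigl((1-\sin(h/(2\sqrt{1+4\tau^2})))/y_*-d\bigr)$, which degenerates as $h\to\sqrt{1+4\tau^2}\pi$, while the capping cost carries the large factor $h\approx\pi\sqrt{1+4\tau^2}$. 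A direct bookkeeping of the natural assemblies (slice disks cut along $\{y=y_*\}$, arch over the base disk, caps on the vertical cylinder over the circle) gives savings and costs that are the \emph{same} order $\sqrt{d}\,e^{R/2}$ with constants that, in my estimates, favor the cost once $h$ is close to $\sqrt{1+4\tau^2}\pi$; tuning $d$, $X$ and the translation does not change these leading constants. So the plan as stated is only guaranteed to reproduce a non-sharp threshold (which the naive cylinder already gives, $h<2\sqrt{1+4\tau^2}$), and you would need a new idea -- essentially an interface-free competitor, which is what the paper's twisted rotational bigraph provides -- to reach every $h<\sqrt{1+4\tau^2}\pi$. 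Also, your closing sentence conflates sharpness of the construction with non-existence for $h\ge\sqrt{1+4\tau^2}\pi$; the latter is a separate barrier argument (the paper proves it with the surfaces $u_d^\pm$ and the maximum principle) and does not follow from the competitor having vertical extent $\sqrt{1+4\tau^2}\pi$.
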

\begin{proof}
	Let $\gamma_1\subset\{t=0\}$ be  the circle centered at $(0,1,0)$ with hyperbolic radius $\rho$ and let $\gamma_2$ be its translated copy at height $h$. The disk $D\subset\{t=0\}$ bounded by $\gamma_1$ is the unique minimal surface with boundary $\gamma_1$ due to the maximum principle,  and similarly for $\gamma_2$.  Then by Douglas criterium there exists an area minimizing  annulus if there exists an annulus $A$ with boundary $\gamma_1\cup\gamma_2$ such that:
\begin{equation}\label{c: Douglas}
 	2 \text{Area}(D)>\text{Area}(A). 
\end{equation}

The circle of $\h^2$ with hyperbolic radius $\rho$ centered at the point $(0,1)$ in the half space model is given by the equation   ${x^2+y^2+1}=2y\cosh(\rho)$, therefore a parametrization for $D$ is given by $X(x,y)=(x,y,0 )$, where  ${x^2+y^2+1}<2y \cosh(\rho) $.  Denote by $\hat{g}$ the induced metric by $X$. The entries of $\hat g$ in coordinates $(x,y)$ are given by
$\hat{g}_{11}=\frac{1+4\tau^2}{y^2}$, $\hat{g}_{12}=0$ and $\hat{g}_{22}=\frac{1}{y^2}.$ Then:

\[\text{Area}(D)=\displaystyle\int_{} \sqrt{\det \hat{g}}\ dx\ dy =\displaystyle\int_{\cosh(\rho)-\sinh(\rho)}^{\cosh(\rho)+\sinh(\rho)}\displaystyle\int_{-\sqrt{2\cosh(\rho)y -y^2-1}}^{\sqrt{2\cosh(\rho)y -y^2-1}}\dfrac{\sqrt{1+4\tau^2}}{y^2}dx\ dy  \]
\[=2\pi\sqrt{1+4\tau^2}(\cosh(\rho)-1).\]

 Polar coordinates $(r,\theta,t)$ in the cylinder model of $\PSL$ are given by
\[(x(r,\theta),y(r, \theta),t)=( \tanh(r/2)\cos \theta, \tanh(r/2)\sin \theta,t ),\]
which allow us to rewrite the metric  of $\PSL$ as 
\[ ds^2= dr^2+ \sinh^2r d\theta^2 +(-4\tau \sinh^2\left( \frac{r}{2}\right)  d\theta+dt)^2.\]
We consider the annulus  parametrized in polar coordinates  as 
\[Y(r,\theta)=( r, \theta, \pm u(r)+ v(\theta)), \ r\in (\bar\rho,\rho), \ \theta\in[0,2\pi),\]
where 
\[v(\theta)=4\tau \arctan\left(    \frac{\tanh(\rho/2)\sin\theta}{1-\tanh(\rho/2) \cos \theta}          \right)       \]
and   
\[U(r)=\displaystyle\int _{\bar \rho}^{r} \frac{\sinh \bar \rho}{\sqrt{\sinh^2 s-\sinh(\bar \rho)}}ds, \  \   \ u(r)=\sqrt{1+4\tau^2}  U(r). \]

 Observe that $U$ is the height function of a minimal catenoid in $\h^2\times\R$ ($\tau=0$), see for instance Proposition 3.6 in~\cite{Penafiel}, and consequently $\lim_{\bar{\rho}\to \infty}U(\infty)=\frac{\pi}{2}$. Moreover,  the third coordinate of $\psi(Y(\rho,\theta))$ is $u(\rho)$, so  the boundary of the annulus consists of two circles of radius $\rho$ contained in two horizontal minimal slices.

\begin{lemma}
 Assume that $\tau>0$. The function $U$ and $v$ have the following properties:
\begin{enumerate}
	\item $2\pi \int _{\bar \rho}^{\rho} \sqrt{1+U_r^2}\sinh rdr\leq 2\pi \sqrt{\cosh^2\rho-\cosh^2\bar{\rho}}$.
	\item  $-2\tau<v'(\theta)<2\tau e^{\rho}-2\tau$.
	\item If $2e^{-\rho/2}<\theta<2\pi-2e^{-\rho/2}$ then   $-2\tau<v'(\theta)<0$.
\end{enumerate}
\end{lemma}
\begin{proof}
To prove item (1), we estimate
\begin{align*}	
 & 2\pi \displaystyle\int _{\bar \rho}^{\rho} \sqrt{1+U_r^2}\sinh rdr=2\pi \displaystyle\int _{\bar \rho}^{\rho}\frac{\sinh^2(r)}{\sqrt{\sinh^2(r)-\sinh(\bar \rho)^2}}dr
\\
 &\ \ \  =2\pi \cosh \bar \rho\displaystyle\int_1^{\frac{\cosh \rho}{\cosh \bar \rho}} \dfrac{\sqrt{s^2-\frac{1}{{\cosh^2 \bar \rho}}}}{\sqrt{s^2-1}}ds 
< 2\pi \cosh \bar\rho \displaystyle \int_{1}^{\frac{\cosh \rho}{\cosh \bar \rho}}\dfrac{s}{s^2-1}ds
\\
 &\ \ \  =2\pi\sqrt{\cosh^2 \rho-\cosh^2 \bar\rho}.
\end{align*}

As for item (2), we compute $v'(\theta)=\frac{4 \tau  \tanh \left(\frac{\rho }{2}\right) \left(\cos
	(\theta )-\tanh \left(\frac{\rho }{2}\right)\right)}{1-2
	\cos (\theta ) \tanh \left(\frac{\rho }{2}\right)+\tanh
	^2\left(\frac{\rho }{2}\right)}$, which is symmetric with respect to $\theta=\pi$. We have that $v'$ is  decreasing   from $0$ to $\pi$ and  increasing  from $\pi$ to $2\pi$. Therefore:
\[-2\tau <v'(\pi)<v'(\theta)<v'(0)= 2\tau e^{\rho}-2\tau.   \]   

As for item (3), using again the monotonicity we have that \[-2\tau<v'(\theta)<v'(2 e^{-\rho/2})<0.\]	
\end{proof}

We will estimate the area of a half of the annulus $Y$.
The entries of the induced metric $\widetilde{g}$ of $Y$  in coordinates $(r,\theta)$ are given by  $\widetilde g_{11}=1+u'(r)^2$, $g_{22}=\sinh^2r+\left(v'(\theta )-4 \tau  \sinh
^2\left(\frac{r}{2}\right)\right)^2$ and $\widetilde g_{12}=u'(r) \left(v'(\theta )-4 \tau  \sinh
^2\left(\frac{r}{2}\right)\right)$. The area element $W$ satisfies: 
\[W^2=\widetilde g_{11}\widetilde g_{22}-\widetilde g_{12}^2=  \sinh ^2(r) \left(u'(r)^2+1\right)+\left(v'(\theta )+2\tau- 2\tau \cosh(r) \right)^2.  \]

 Assume that $\tau>0$ (this is not restrictive because the area of the annulus in the case of $\mathbb E(-1,-\tau)$ is the same). We have that:
\begin{align*}
\ W^2 &= \sinh ^2(r) \left(u'(r)^2+1\right)+\left(v'(\theta )+2\tau- 2\tau \cosh(r) \right)^2    
\\
&\leq \sinh ^2(r) \left(u'(r)^2+1\right)+  (v'(\theta )+2\tau)^2 +4\tau \cosh^2(r).   
\end{align*}
 If $2e^{-\rho/2}<\theta<2\pi-2e^{-\rho/2}$, then we have the estimate:
\begin{align*}
 W^2&\leq  \sinh ^2(r) \left(u'(r)^2+1\right)+ 4\tau \cosh^2(r)  +4\tau^2              
\\
 &= 4 \tau ^2 \cosh ^2(r)+\sinh ^2(r) \left(\left(1+4 \tau
^2\right) U_r^2+1\right)+ 4\tau^2 
\\
 &= \left(1+4 \tau ^2\right) \left({U_r}^2+1\right) \sinh
^2(r)+4\tau^2+ 4\tau^2.
\end{align*}
Otherwise, the estimate is:
\begin{align*}
 W^2&\leq  \sinh ^2(r) \left(u'(r)^2+1\right)+ 4\tau \cosh^2(r)  +4\tau^2e^{2\rho}              
\\
 &= 4 \tau ^2 \cosh ^2(r)+\sinh ^2(r) \left(\left(1+4 \tau
^2\right) U_r^2+1\right)+ 4\tau^2e^{2\rho} 
\\
&=  \left(1+4 \tau ^2\right) \left({U_r}^2+1\right) \sinh
^2(r)+4\tau^2+ 4\tau^2e^{2\rho}.
\end{align*}

Then the area of a half of the annulus $Y$ can be estimated as:
\begin{align*}
\frac{1}{2}\text{Area}(Y)&= \displaystyle\int_{0}^{2\pi} \displaystyle\int_{\bar{\rho}}^{\rho}W dr d\theta
\\
&\leq\displaystyle\int_{0}^{2e^{-p/2}} \displaystyle\int_{\bar{\rho}}^{\rho}W dr d\theta
+\displaystyle\int_{2e^{-p/2}}^{2\pi-2e^{-p/2}} \displaystyle\int_{\bar{\rho}}^{\rho}W dr d\theta
+\displaystyle\int_{2\pi-2e^{-p/2}}^{2\pi} \displaystyle\int_{\bar{\rho}}^{\rho}W dr d\theta   
\\
 &\leq 2\pi \displaystyle\int_{\bar{\rho}}^{\rho}(\sqrt{1+4\tau^2}\sqrt{1+U_r^2}\sinh(r)+\sqrt{8}\tau) dr + 8\tau(\rho-\bar{\rho}) e^{\rho/2}  
\\
 &\leq 2\pi \sqrt{1+4\tau^2}\sqrt{\cosh^2\rho-\cosh^2\bar{\rho}}+2\sqrt{8}\pi\tau(\rho-\bar\rho)+8\tau (\rho-\bar{\rho})e^{\rho/2}.  
\end{align*}

We want to compare $\frac{1}{2}\text{Area}(Y)$ and $\text {Area}(D)= 2\pi\sqrt{1+4\tau^2}(\cosh \rho -1 )$.
Choosing $\rho=5/4\bar{\rho}$ we have that: 
\begin{align*}
\sqrt{\cosh^2(5/4 \bar{\rho})-\cosh^2\bar{\rho}}&= \cosh(5/4 \bar{\rho})\left( 1-\frac{\cosh^2\bar{\rho}}{\cosh^2(5/4 \bar{\rho})}\right) ^{1/2}    
\\
&=\cosh(5/4 \bar{\rho})\left( 1-1/2\frac{\cosh^2\bar{\rho}}{\cosh^2(5/4 \bar{\rho})} -1/8 \frac{\cosh^4\bar{\rho}}{\cosh^4(5/4 \bar{\rho})} +\dots   \right)       
\\
&=\cosh(5/4 \bar{\rho})-\frac{1}{4}e^{3/4\bar \rho}+o (e^{1/4\bar{\rho}}) 
\end{align*}
 
Therefore, the area can be estimated as:

\[\frac{1}{2}\text{Area}(Y)< 2\pi \sqrt{1+4\tau^2}( \cosh(5/4 \bar{\rho})-\frac{1}{4}e^{3/4\bar \rho}) +o(\bar{\rho} e^{5\bar{\rho}/8}), \]
which is less than $2\pi \sqrt{1+4\tau^2}( \cosh(5/4 \bar{\rho})-1)$ when $\bar\rho $ is large enough. Moreover, the vertical distance between the boundary components of the annulus tends to $ \sqrt{1+4\tau^2}\pi$ as $\bar \rho\to \infty$. Then by Douglas criterium there exists an area minimizing annulus $A$ with boundary $\gamma_1\cup\gamma_2$ for all vertical distances $h\in (h_0, \sqrt{1+4\tau^2}\pi)$. We call $A_h$ the intersection of the annulus $A$ with a slab composed by two minimal slices separated by height $h$.
\end{proof}

The quantity $\sqrt{1+4\tau^2}\pi$ is sharp because, using  the surfaces $u_{d}^\pm$ in Section 3, we can prove that such annuli $A_h$ do not exist for $h\geq  \sqrt{1+4\tau^2}\pi$.

\begin{proposition}
Let $\gamma_1$ and $\gamma_2$ be two closed curves in $\PSL$ in the half space model and assume that  the region  $\{(x,y,t),\ h_0< t< h_0+\sqrt{1+4\tau^2}\pi \}$,  for some $t_0\in\R$, separates $\gamma_1$ and $\gamma_2$. Then there are no  compact minimal surfaces with boundary $\gamma_1\cup \gamma_2$. 	
	
\end{proposition}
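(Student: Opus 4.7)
The plan is a sliding-barrier argument using the one-parameter family of complete minimal bigraphs $u_d^\pm$ described in item~(1) of Section~3, which are precisely contained in a horizontal slab of vertical height $\sqrt{1+4\tau^2}\pi$. Suppose for contradiction that a compact minimal surface $M$ with $\partial M=\gamma_1\cup\gamma_2$ exists. After a vertical translation I may assume the separating slab is $\{0<t<W\}$ with $W=\sqrt{1+4\tau^2}\pi$, with $\gamma_1$ strictly below and $\gamma_2$ strictly above. For each $d>0$ let $\Sigma_d$ denote the bigraph $u_d^\pm$ shifted upward by $W/2$, so that $\Sigma_d\subset\{0\leq t\leq W\}$, has planar domain $\{0<y\leq 1/d\}\subset\h^2$, and its two sheets meet the slab boundaries only along the curves $\{y=1/d\}$.

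The plan is to vary $d$. Since $M$ is compact it is contained in $\{y\geq y_{\min}\}$ for some $y_{\min}>0$, so $\Sigma_d\cap M=\emptyset$ whenever $d>1/y_{\min}$. On the other hand, as $d\to 0^+$ the sheets of $\Sigma_d$ converge uniformly on compact sets to the minimal slice $\{t=W/2\}$, and $M$ must cross this slice to join $\gamma_1$ with $\gamma_2$; hence $\Sigma_d\cap M\neq\emptyset$ for $d$ small. Setting $d^*=\sup\{d>0:\Sigma_d\cap M=\emptyset\}$, the surfaces $\Sigma_{d^*}$ and $M$ have a first tangential contact at some point $p^*$, which is an interior point of $M$ because $\gamma_1\cup\gamma_2$ lies strictly outside $\{0\leq t\leq W\}$. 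If moreover $p^*$ is an interior point of $\Sigma_{d^*}$ (i.e.\ $y^*<1/d^*$), then since the $t$-coordinate of each sheet at fixed $(x,y)$ is monotone in $d$, the definition of $d^*$ forces $M$ to lie locally on one side of the relevant sheet of $\Sigma_{d^*}$. The strong maximum principle together with unique continuation then imply that the connected component of $M$ through $p^*$ agrees with the entire connected component of $\Sigma_{d^*}$ through $p^*$, which is a complete non-compact minimal graph, contradicting the compactness of $M$.

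The main technical obstacle is the degenerate edge case $y^*=1/d^*$, where $p^*$ lies on the boundary curve of $\Sigma_{d^*}$ (on which the sheet has vertical tangent plane) and simultaneously on one of the slab boundaries $\{t=0\}$ or $\{t=W\}$, where the standard strong maximum principle does not apply directly. My plan to deal with this is either to combine the $d$-sliding with a small generic vertical translation so that the first contact occurs in the smooth interior of a sheet, or to include the minimal slices $\{t=0\}$ and $\{t=W\}$ themselves as additional barriers: a tangential contact of $M$ with one of these slices at an interior point of $M$ would, by the same maximum-principle and unique-continuation argument, force a component of $M$ to coincide with that slice, which is likewise a complete non-compact minimal surface, again contradicting the compactness of $M$.
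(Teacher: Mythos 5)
Your overall strategy---sweeping the one-parameter family $u_d^\pm$ of Section~3 through the slab and invoking the maximum principle at a first contact---is the same as the paper's, but your normalization of the barriers creates a gap that you flag and do not close. By shifting the two graphs $\pm\sqrt{1+4\tau^2}\arcsin(dy)$ so that their common asymptotic line sits at mid-height $W/2$ (with $W=\sqrt{1+4\tau^2}\pi$), your $\Sigma_d$ consists of two minimal sheets \emph{with boundary}: each sheet terminates, with vertical tangent plane, along the curve $\{y=1/d\}$ lying on a slab wall $\{t=0\}$ or $\{t=W\}$. If the first contact $p^*$ occurs on such an edge, neither tangency nor local one-sidedness of $M$ with respect to the sheet can be deduced (for $d$ slightly larger than $d^*$ the sheet no longer even passes over $\Pi(p^*)$), so the interior maximum principle gives nothing; this is a genuine obstruction, not a technicality. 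Neither of your patches repairs it. A small vertical translation does not remove the edges, pushes one edge outside the closed slab where it may meet $\gamma_1\cup\gamma_2$ (destroying the disjointness from $\partial M$ on which the whole argument rests), and no argument is offered that the first contact becomes interior for generic shifts. Using the slices $\{t=0\}$, $\{t=W\}$ as barriers also fails: the maximum principle needs $M$ to lie locally on one side of the slice, which is not known and is typically false here, since $\partial M$ lies in $\{t\le 0\}\cup\{t\ge W\}$ and $M$ crosses both walls; moreover at an edge point the sheet's tangent plane is vertical, so a contact with the sheet there is not even a tangential contact with the slice.

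The paper's proof avoids all of this by the other normalization: it uses the complete bigraph whose two asymptotic lines are exactly the slab boundaries $t=h_0$ and $t=h_0+\sqrt{1+4\tau^2}\pi$, i.e.\ the surface $\{d\,y=\sin((t-h_0)/\sqrt{1+4\tau^2})\}$, whose fold curve lies at mid-height in the interior of the open slab. Each such barrier is complete and boundaryless, is contained in the open slab (hence never meets $\gamma_1\cup\gamma_2$), and the family sweeps (indeed foliates) the open slab, so any first contact is an interior point of both surfaces with $M$ locally on one side, and the maximum principle applies directly. Replacing your $\Sigma_d$ by these surfaces turns your argument into the paper's. Two smaller points: your critical parameter should be $d^*=\sup\{d>0:\ \Sigma_d\cap M\neq\emptyset\}$ (as written, $\sup\{d:\Sigma_d\cap M=\emptyset\}=+\infty$); and, as in the paper, the step ``$M$ must cross the slab'' implicitly uses that $M$ is connected, which is worth stating.
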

\begin{proof}
	
Assume by contradiction that there exists one such surface $M$. Then, consider the family of surfaces $ u_{d}^\pm$ for $d>0$, with asymptotic boundary the two horizontal lines $\{ (x,0,h_0): x\in \R\}$ and $\{ (x,0,h_0+\sqrt{1+4\tau^2}\pi): x\in \R\}$ in $\partial_\infty\PSL$. Note that the surface $u_d^\pm$ cannot intersect the  boundary of $M$. Then for $d$ small enough they do not intersect $M$ either. On the other hand, for $d$ large enough the surface $ u_{d}^\pm$ intersects $M$.  By continuity, there exists $d_0>0$ such that  $u_{d_{0}}^\pm$ and  $M$ are tangent at an interior point of $M$ and $u_{{d_0}}^\pm$ stays locally at one side of $M$, a contradiction to the maximum principle.		
\end{proof}

\section{Asymptotic theorems}
Using the minimal annuli constructed in Section 4 and considering the half space model for $\PSL$, where some hyperbolic translations keep the $t$-coordinate fixed, we can extend to $\PSL$ the ideas of Theorem 2.1 in \cite{ST}.

\begin{proof}[Proof of Theorem \ref{theorem: Asym}]
		
Let $p_0$ be a point in $\Gamma'\cap L$ and assume that $\Pi(p_0)$ is not the point at infinity of $\partial_\infty \h^2$. If there is a vertical segment in $\Gamma'\cap L$, we choose $p_0$ as the middle point of this segment. Up to an isometry, we can assume that $p_0=(0,0,0)$. Then we have that $\Gamma'$ is contained in the region ${\{(x,0,t):\ a< t<b\}}$, with $-\sqrt{1+4\tau^2}\frac{\pi}{2}<a<b< \sqrt{1+4\tau^2}\frac{\pi}{2}$. Consider two points $q_1=(-\epsilon,0,0)$ and $q_2=(\epsilon,0,0)$, and assume that $\Pi(q_1)\in\Pi(\Gamma')$ and $\Pi(q_2)\notin\Pi(\Gamma')$, see Figure \ref{asymptotic}. Let $c$ be the  geodesic of $\h^2$ with ideal points $\Pi(q_1)$ and $\Pi(q_2)$. Let $P=\Pi^{-1}(c)$ be the minimal vertical plane, and let $S_1$ and $S_2$ be the minimal slices $\{t=a\}$ and $\{t=b\}$. Let $G_\epsilon$ be the region of $\mathcal H \backslash \left( P\cup S_1\cup S_2\right) $ that contains $p_0$ in its asymptotic boundary. Let $c_1$ be a geodesic of $\h^2$ joining two interior points of the open arc of $\partial_\infty\h^2$ with endpoints $\Pi(p_0)$ and $\Pi(q_2)$, and let $U$ be the region of $\mathcal H \backslash \left( \Pi^{-1}(c_1)\cup S_2\cup S_3\right) $ between the slices $S_1$ and $S_2$ that does not contain $p_0$ in its asymptotic boundary.

 Assume by contradiction that there exists  a minimal surface $M$ with asymptotic boundary $\Gamma$, and let $M_0=M\cap G_\epsilon$.  For $\epsilon$ small enough we can ensure that:
 	
 \begin{enumerate}
 	\item The asymptotic boundary of $M_0$ is a subarc $\Gamma_0\subset\Gamma'$.
 	\item The arc $\Gamma_0$ is contained in the region ${\{(x,0,t):\ \bar a< t<\bar b\}}$, with $a<\bar a<\bar b<b $, and $M_0$ is strictly contained in the region   $\{(x,0,t):\ \bar a\leq t\leq \bar b \}$.
 	\item The finite boundary of $M_0$ is contained in $P$. Observe that for $\epsilon$ small enough $M_0$ does not intersect the slices $S_1$ and $S_2$.
 	\item The surface $M_0$ does not intersect the region $U$.
 \end{enumerate}
 	\begin{figure}[htb]
 	\begin{center}
 	\vspace{-0.5cm}\includegraphics[height=7cm]{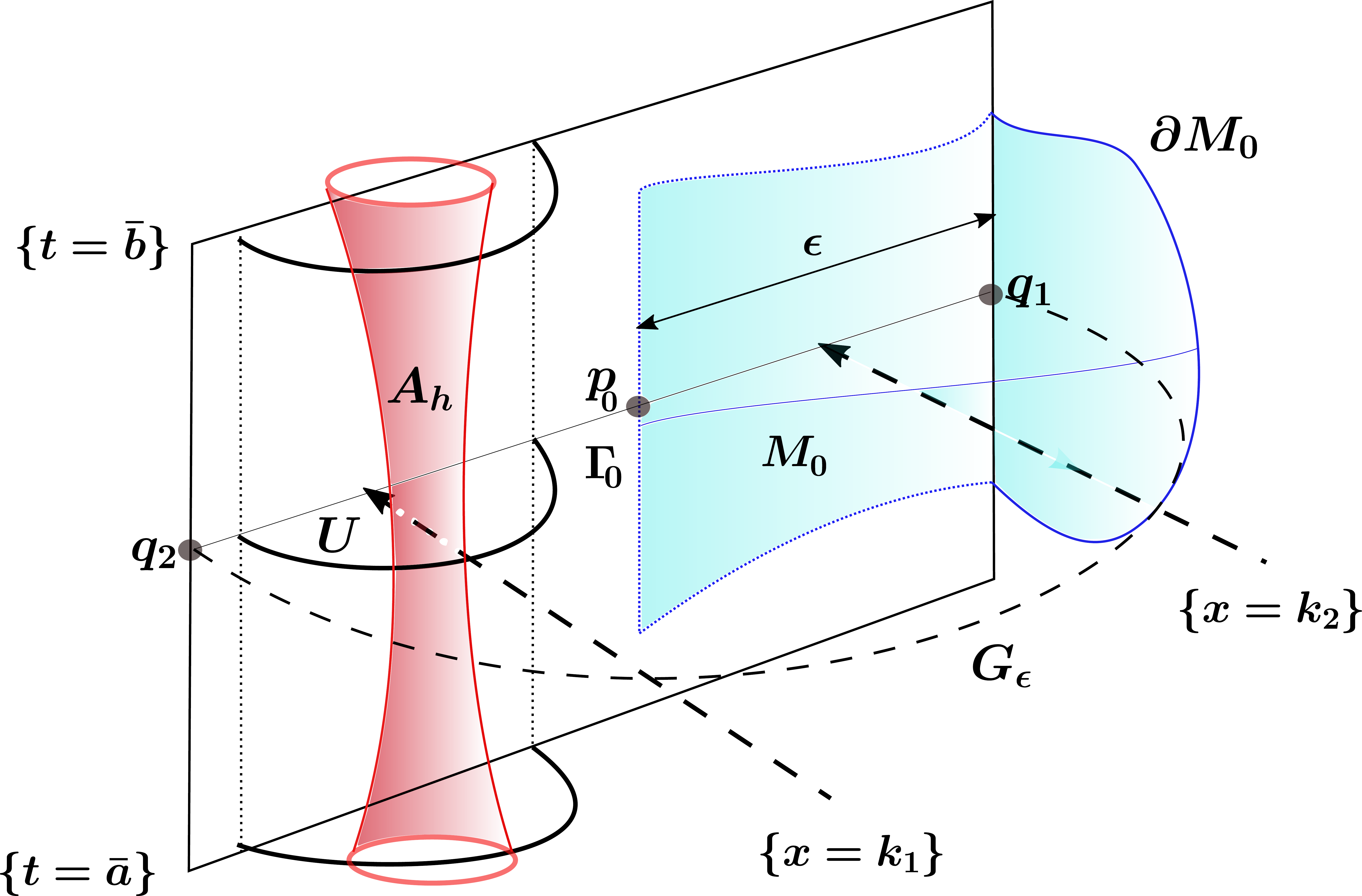}
 	\caption{A plot of the proof. In red the annulus $A_h$ contained in $U$ and in blue the surface $M_0$.} \label{asymptotic}
 	\end{center}
 \end{figure}
 
  Using Proposition \ref{annulus}, we consider a minimal compact annulus $A_h$ with $h=b-a$  with boundary two  curves contained in slices at height $a$ and $b$, respectively.  We can send this annulus to the region $U$ by  hyperbolic translations along the horizontal geodesic   $\{x=k_1\}\subset \h^2$, where $(k_1,0)$ is an interior point in the asymptotic boundary of $\Pi(U)$. Note that these isometries preserve the $t$-coordinate, and the boundary of $A_h$ lies outside the region $\{(x,0,t):\ \bar a< t<\bar b\}$. Now, consider a horizontal geodesic $\{x=k_2\}\subset\h^2$ with $-\epsilon<k_2<0$, and  hyperbolic translations along it, that is,  Euclidean homotheties with center $(k_2,0)$ in this model. We can translate the annulus $A_h$ along this geodesic towards $(k_2,0)$. Observe that the translated copies of the annulus are  contained in the translated copies of $U$, which are in turn contained in $G_\epsilon$. Then, the translated annuli do not intersect the boundary of $M_0$, and also their boundaries  do not intersect the surface $M_0$. Then,  we will achieve a first interior contact point, which contradicts  the maximum principle.
\end{proof}

\begin{remark}\label{R:local}
Note that Theorem \ref{theorem: Asym} is local, so it does not depend on the model. If we consider the cylinder model for $\PSL$ and  $\Gamma$ is a curve in $\partial_\infty\PSL$ as in Theorem \ref{theorem: Asym}, then we can consider the image of this curve by the extended isometry $\psi:\mathcal C\to \mathcal H$ given in Proposition \ref{prop:iso}. We know that if the straight line $L$ is contained in $\Gamma'$ then the same happens for $\psi(\Gamma')$, and if $\Gamma'$ is in one side of $L$ then the same happens for $\psi(\Gamma')$. Considering a subarc $\Gamma''\subset\Gamma'$ we can ensure that $\psi(\Gamma'')$ is contained in $\{(x,0,t),\ t_0<t<t_0+\sqrt{1+4\tau^2}\pi\}$.	
\end{remark}
 We deduce the next corollaries:

 \begin{corollary}\label{cor:thin}
 Let $\Gamma_1:\partial_\infty\h^2\to \partial_\infty\h^2\times\R\subset\partial_\infty\PSL$ be a complete graphical curve parametrized by a complete graph $\Gamma_1(\theta)=(\theta,t(\theta))$. Consider the translated copy $\Gamma_2(\theta)=(\theta,t(\theta)+\sqrt{1+4\tau^2}\pi)$. Then:
 \begin{enumerate}
 	
 	\item There is no  properly immersed minimal surface with asymptotic boundary $\Gamma\subset \partial_\infty\PSL$, being $\Gamma$ a Jordan curve homologous to zero, strictly contained between $\Gamma_1$ and $\Gamma_2$.
 	
 	\item There is no properly immersed minimal surface with asymptotic boundary $\Gamma\subset \partial_\infty\PSL$, being $\Gamma$ a closed curve  strictly contained between $\Gamma_1$ and $\Gamma_2$ whose projection omits an open arc in $\{y=0\}$.
 	
 \end{enumerate}	

  \end{corollary}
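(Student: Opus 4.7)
The plan is to deduce both items from Theorem \ref{theorem: Asym} via Remark \ref{R:local}, which lets us translate the cylinder-model setup into the half-space model where the theorem is stated. The core observation is that the topological hypotheses on $\Gamma$ force its projection $\Pi(\Gamma)\subset\partial_\infty\h^2$ to miss an open arc, and this supplies the tangent vertical line $L$ required by the theorem.

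First I will reduce (1) to (2). Identifying $\partial_\infty\h^2\times\R$ with the cylinder $\s^1\times\R$, a Jordan curve homologous to zero has zero winding number around the $\s^1$-factor, so the continuous map $\Pi|_\Gamma\colon\s^1\to\s^1$ is null-homotopic and hence lifts to a map into $\R$. Since the image of the lift is a compact subset of $\R$, $\Pi(\Gamma)$ is a proper subset of $\partial_\infty\h^2$ and its complement contains an open arc; thus (1) becomes a special case of (2).

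For (2), let $I\subset\partial_\infty\h^2\setminus\Pi(\Gamma)$ be an omitted open arc, let $\theta_L$ be an endpoint of $I$, pick $p_0\in\Gamma$ with $\Pi(p_0)=\theta_L$, and set $L=\{\theta_L\}\times\R$. By construction $p_0\in\Gamma\cap L$, and since $\Pi(\Gamma)\subset\overline{\partial_\infty\h^2\setminus I}$, the curve $\Gamma$ lies on the side of $L$ opposite to $I$. I then take a small subarc $\Gamma'$ of the connected component of $\Gamma$ through $p_0$, centered at $p_0$, whose endpoints do not lie on $L$; in the degenerate case where $\Gamma$ contains a vertical subsegment of $L$ near $p_0$ I first replace $p_0$ by the midpoint of that segment, exactly as in the proof of Theorem \ref{theorem: Asym}. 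By continuity, the $t$-coordinate of $\Gamma'$ can be made to vary by an arbitrarily small amount, so $\Gamma'$ fits into an arbitrarily narrow horizontal slab.

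Finally I apply Remark \ref{R:local}: composing $\psi$ of Proposition \ref{prop:iso} with a rotation of the cylinder model if necessary, I may assume $\psi(\theta_L)\in\R$, so that $\psi(L)$ is a genuine vertical line $\{x_0\}\times\R$ which separates the half-space asymptotic boundary into two sides. Shrinking $\Gamma'$ once more if needed, $\psi(\Gamma')$ meets the three hypotheses of Theorem \ref{theorem: Asym} (the sided-ness and non-empty tangential intersection being preserved by $\psi$, and condition (3) granted by the smallness of the subarc), and the non-existence conclusion follows. The only real subtlety is the non-transverse contact between $\Gamma$ and $L$, dispatched by the midpoint choice borrowed from the proof of Theorem \ref{theorem: Asym}; the constant $\sqrt{1+4\tau^2}\pi$ in the statement of the corollary never enters the argument quantitatively, since we may freely shrink $\Gamma'$, and its role is merely to make the hypothesis "strictly between $\Gamma_1$ and $\Gamma_2$" the sharp one in light of Theorem \ref{theorem: Asym}.
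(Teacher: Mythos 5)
Your deduction of item (2) from Theorem \ref{theorem: Asym} via Remark \ref{R:local} is essentially the right route, but there are two genuine problems. First, the reduction of (1) to (2) fails: a Jordan curve homologous to zero in $\partial_\infty\h^2\times\R\cong\s^1\times\R$ need \emph{not} have projection omitting an arc. The lift of $\Pi|_\Gamma$ to $\R$ indeed has compact image, but a compact interval of length at least $2\pi$ projects onto all of $\s^1$; concretely, the boundary of a thin tubular neighbourhood of an embedded arc spiralling once around the cylinder is a simple, null-homotopic curve whose projection is the whole circle. So (1) is not a special case of (2), and the hypothesis ``homologous to zero'' must be used differently: lift the angular coordinate along $\Gamma$ to a real-valued function (possible exactly because the winding number is zero), take a point $p_0$ where it attains its maximum, and let $L$ be the vertical line through $p_0$; near that maximum $\Gamma$ touches $L$ from one side, which produces the subarc $\Gamma'$ required by Theorem \ref{theorem: Asym}. (For (2), one should also take $I$ to be a maximal omitted arc, so that its endpoint really lies in the closed set $\Pi(\Gamma)$.)

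Second, your closing claim that the constant $\sqrt{1+4\tau^2}\pi$ ``never enters the argument quantitatively'' and that $\Gamma'$ fits into an arbitrarily narrow slab is incorrect, and it hides the only place where ``strictly contained between $\Gamma_1$ and $\Gamma_2$'' is used. Since the endpoints of $\Gamma'$ must avoid $L$, $\Gamma'$ is forced to contain the \emph{entire} connected component of $\Gamma\cap L$ through $p_0$, and this component can be a vertical segment of height arbitrarily close to $\sqrt{1+4\tau^2}\pi$; it cannot be shrunk away, and the midpoint trick in the proof of Theorem \ref{theorem: Asym} only selects $p_0$ inside an already admissible $\Gamma'$, it does not produce one. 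What rescues condition (3) is precisely the strict containment: by compactness the component lies in a closed slab of height strictly less than $\sqrt{1+4\tau^2}\pi$, and a slightly larger subarc with endpoints off $L$ (still on one side of $L$) does too. If the constant were really irrelevant, the corollary would contradict the existence of the tall rectangles of Section 3, whose asymptotic boundary is a Jordan curve homologous to zero projecting onto an arc, but with vertical sides of height greater than $\sqrt{1+4\tau^2}\pi$. With these two corrections (extremum of the lifted angle for (1), and the careful treatment of the vertical component of $\Gamma\cap L$ in both items) the deduction from Theorem \ref{theorem: Asym} goes through as you outline.
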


 \begin{remark}
 	
This corollary is independent of the model. If we have a curve in this situation in the cylinder model then its image by the isometry $\psi$ given in Proposition~\ref{prop:iso} is in the assumptions of Corollary \ref{cor:thin} in the half space model. Also note that the  curve $\Gamma_2$ can be replaced by another graphical curve such that the vertical height is smaller than $\sqrt{1+4\tau^2}\pi$ pointwise. We state  a particular case when the curves $\Gamma_1$ and $\Gamma_2$ are horizontal circles in the cylinder model:
 \end{remark}

 \begin{corollary}
 There is no properly immersed minimal surface in $\PSL$ with asymptotic boundary  a Jordan curve  $\Gamma\subset \partial_{\infty}\PSL$ homologous to zero, strictly contained between two horizontal circles in $\partial_{\infty}\PSL$ at distance less than $\sqrt{1+4\tau^2}\pi$ in the cylinder model.

\end{corollary}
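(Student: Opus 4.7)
The plan is to reduce the statement to Corollary \ref{cor:thin}(1) via the isometry $\psi:\mathcal{C}\to\mathcal{H}$ from Proposition \ref{prop:iso}, transporting the problem from the cylinder model to the half space model where the required non-existence result is already available.

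First I would argue by contradiction: assume that $M\subset\PSL$ is a properly immersed minimal surface whose asymptotic boundary is a Jordan curve $\Gamma$ homologous to zero, strictly trapped between two horizontal circles $C_1=\partial_\infty\h^2\times\{t_0\}$ and $C_2=\partial_\infty\h^2\times\{t_0+d\}$ in the cylinder model, where $d<\sqrt{1+4\tau^2}\pi$.

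Next I would check that $\psi$ maps the configuration into the setting of Corollary \ref{cor:thin}(1). Since $\psi$ is an isometry, $\psi(M)$ is a properly immersed minimal surface in $\mathcal{H}$ and $\psi(\Gamma)$ remains a Jordan curve homologous to zero in the asymptotic boundary. Parametrizing $C_i$ by $\theta\mapsto(e^{i\theta},t_i)$ with $t_1=t_0$, $t_2=t_0+d$, a short computation from formula \eqref{iso2} yields
\[
\psi(e^{i\theta},t_i)=\bigl(-\cot(\theta/2),\,0,\,t_i-2\tau\pi+2\tau\theta\bigr),\qquad\theta\in(0,2\pi),
\]
so each $\psi(C_i)$ is a graphical curve over $\{y=0\}\subset\partial_\infty\h^2$ in the half space model. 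Moreover, the $t$-correction in $\psi$ depends only on $z$, hence vertical translations commute with $\psi$; in particular $\psi(C_2)$ is exactly the vertical translate of $\psi(C_1)$ by height $d$, and the pointwise vertical spacing between the two graphical curves equals $d<\sqrt{1+4\tau^2}\pi$.

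Finally, I would set $\Gamma_1=\psi(C_1)$ and let $\Gamma_2$ be its translated copy at vertical height $\sqrt{1+4\tau^2}\pi$. Since $d<\sqrt{1+4\tau^2}\pi$, the curve $\psi(C_2)$ lies strictly below $\Gamma_2$, so $\psi(\Gamma)$ is strictly trapped between the two complete graphical curves $\Gamma_1$ and $\Gamma_2$. Corollary \ref{cor:thin}(1) then forbids the existence of $\psi(M)$, contradicting the assumption. The only point of care in this argument is the verification that $\psi$ sends horizontal circles to complete graphical curves in the half space model at constant pointwise vertical distance; once that computation is in place, the result is an immediate transfer of Corollary \ref{cor:thin}(1) through $\psi$.
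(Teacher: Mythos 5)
Your proposal is correct and follows essentially the same route as the paper: the paper justifies this corollary in the preceding remark precisely by transporting the configuration through the boundary extension of the isometry $\psi$ of Proposition \ref{prop:iso}, noting that horizontal circles become graphical curves (indeed graphs of $x\mapsto t_i+4\tau\arctan x$) at constant pointwise vertical distance, and then invoking Corollary \ref{cor:thin}. Your explicit computation of $\psi(e^{i\theta},t_i)$ and the observation that the $t$-correction depends only on $z$ simply make the paper's implicit verification concrete.
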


\subsection{Area minimizing surfaces}

\begin{proof}[Proof of Theorem \ref{minimizing}]

First, note that as in Remark \ref{R:local} this theorem is local so it does not depend on the model. We will assume by contradiction that there exists such surface $M$ in the half space model. 
 
 As $h_\Gamma(p)<\sqrt{1+4\tau^2}\pi$, $p\in I\subset\partial_\infty \h^2$, we can assume up to an ambient isometry that there is a subinterval small enough  $I'=(-\epsilon,\epsilon)\subset I$ such that the asymptotic boundary of $M$ in the region $R=\{(x,0,t): |x|<\epsilon,\  a<t<b\}$ consists of two disjoint curves, where $a,b\in \R$ satisfy $a<b$ and $h_\Gamma (p)<b-a<\sqrt{1+4\tau^2}\pi$ for all $p\in I'$. Let $p_0=(0,0,t_0)$ be a point in the region $R$ strictly contained between the two curves which form the asymptotic boundary of $M$ in the region $R$.

  Let $c$ be the geodesic of $\h^2$ joining the ideal points  $(-\epsilon, 0)$ and $(\epsilon,0)$ of $\partial_\infty \h^2$, let $P=\Pi^{-1}(c)$ be the vertical minimal plane, let $S_1$ and $S_2$ be the slices $\{t=a\}$ and $\{t=b\}$, and let $G_\epsilon$ be the region of $\mathcal H\backslash \left( P\cup S_1\cup S_2\right) $ that contains $p$ in its asymptotic boundary. Let $V$  be a simply connected neighborhood of $p_0$ contained in $G_\epsilon$  such that $M\cap V=\emptyset$.  Let $M_0=M\cap G_\epsilon$, whose asymptotic boundary is contained in the asymptotic boundary of $G_\epsilon$.   As the surface $M$ is proper, choosing $\epsilon$ small enough we can guarantee that there exist $\bar a,\bar b\in \R$ with $a<\bar a<\bar b<b$   such  that:
  
  \begin{enumerate}
  	
  	\item  The asymptotic boundary of $M_0$ in the region $\bar R=\{(x,0,t):\ |x|<\epsilon,\ \bar a<t<\bar b \}$ consists of two disjoint curves which do not intersect $\partial_\infty V$, and $M_0$ is strictly contained in the region   $\{(x,y,t):\ \bar a\leq t\leq \bar b \}$.
  	
  	\item  The finite boundary of $M_0$ is contained in  $P$.
  
  \end{enumerate}    

Consider the area minimizing annulus $A_h$, $h=b-a$,  with asymptotic boundary two curves contained in slices at heights $a$ and $b$ given by Proposition \ref{annulus}. Now, translate  $A_h$  by means of hyperbolic translations along the geodesic  $\{x=0\}$ of $\h^2$, which keep the $t$-coordinate fixed. We can guarantee that there is a translated annulus $\bar A_h$ such that:
\begin{itemize}
 \item $\Pi(\bar A_h)\subset \Pi(V)$,  
 \item the boundary of $\bar A_h$ does not intersect the surface $M_0$, and
 \item $\bar A_h$ intersects the surface $M_0$ in each neighbourhood of the curves which form the asymptotic boundary of $M_0$ in the region $\bar R$, see Figure~\ref{Fig-AreaMinimizing}.
\end{itemize}   In this situation, as $V$ separates $\bar A_h$, there exists at least two compact curves $\gamma_1,\gamma_2\subset \bar A_h\cap M_0$, one above $V$ and the other below $V$. Assume first that these curves are not nulhomotopic in the annulus $\bar A_h$. In this case there is a compact area minimizing surface $\mathcal A\subset \bar A_h$ with boundary $\gamma_1\cup \gamma_2$. We construct a non smooth area minimizing surface by gluing part of the surface $M_0$ with $\mathcal A$ along the curves $\gamma_1$ and $\gamma_2$, which is a contradiction, see Figure~\ref{Fig-AreaMinimizing} left.

 Assume now that $\gamma_1$ is not nulhomotopic in $\bar A_h$, that is, $\gamma_1$ encloses a disk in $\bar A_h$. If $\gamma_1$ also encloses a disk in $M_0$ then replacing one disk by the other we would achieve a contraction. If $\gamma_1$ does not enclose a disk in $M_0$, then there must exist a finite number of  curves $ \gamma_1^i\subset \bar A_h\cap M_0$, $i=1,\dots n$, above $V$ and an area minimizing surface $\mathcal M_0\subset M_0$ with boundary $\gamma_1\cup\gamma_1^1\cup\dots\cup\gamma_1^n$. Then, we repeat the replacement argument, gluing the part of the surface $\bar A_h$ with $\mathcal M_0$ along $\gamma_1, \gamma_1^1,\dots\gamma_1^n$, see Figure~\ref{Fig-AreaMinimizing}, right.       

\end{proof}

\begin{figure}[htb]
	\begin{center}
		\vspace{-0.5cm}\includegraphics[height=7cm]{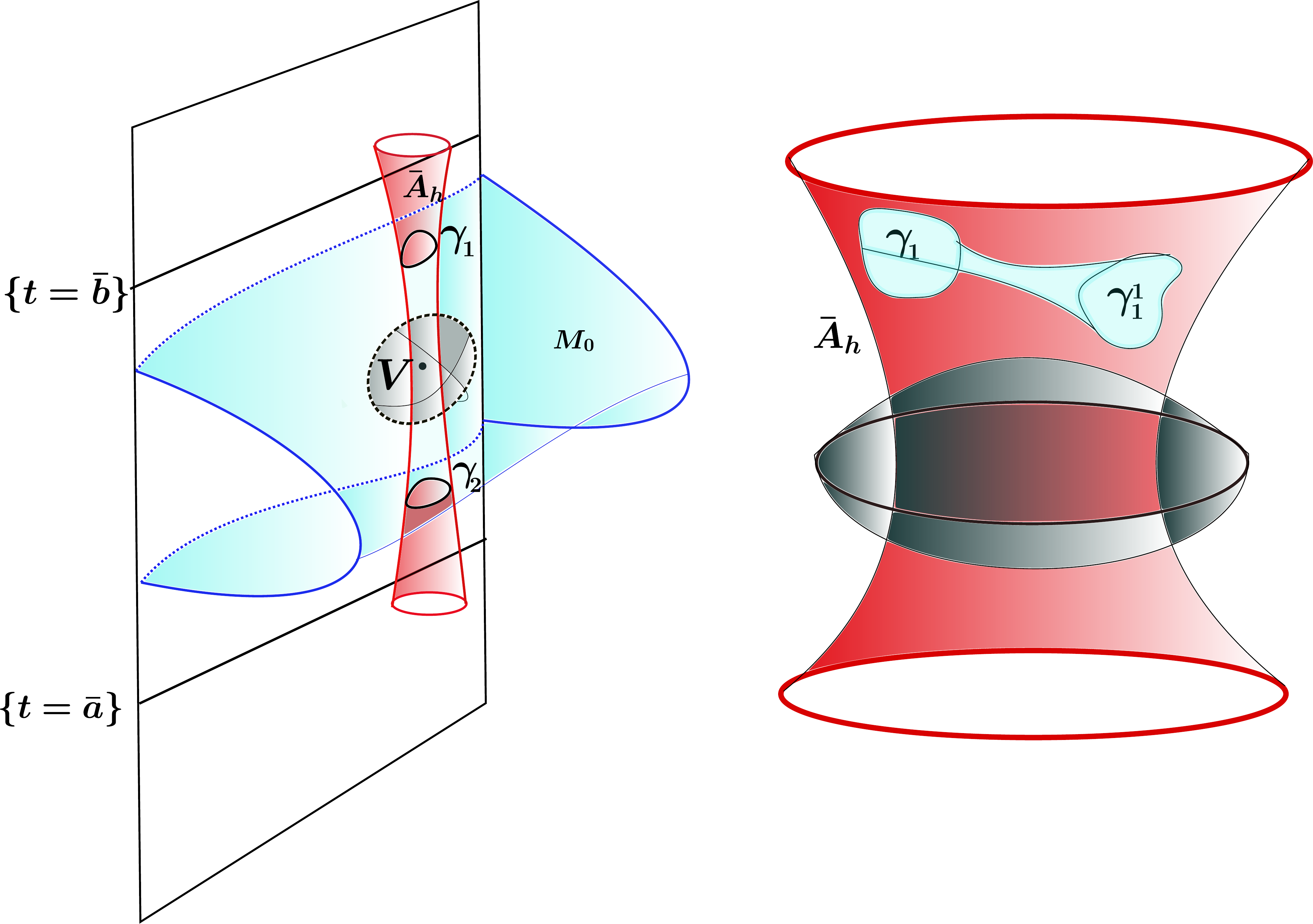}
		\caption{A plot  inside the region $G_\epsilon$, in blue the surface $M_0$, in red the annulus $\bar A_h$ and in gray the neighborhood $V$. At left the case that $\gamma_1$ and $\gamma_2$ are not nulhomotopic in $\bar A_h$. At right the case that $\gamma_1$ is  nulhomotopic $\bar A_h$  } \label{Fig-AreaMinimizing}
	\end{center}
\end{figure}

\section{The Slab Theorem}

This section is devoted to prove Theorem \ref{t:slab}. Recall that  $G_1$ is an entire minimal graph in the cylinder model, whose asymptotic boundary is a closed graphical curve over $\partial_\infty\h^2$ and bounded away from the vertical and we call $G_2$  its translated copy $G_1+(0,0, \sqrt{1+4\tau^2}\pi-\epsilon)$,   being $\epsilon$ a small positive number.
The goal is to construct a continuous family of annuli that play the role of the annuli that gives the map $\Psi$ in Definition \ref{d:slab}, and apply the ideas of \cite{Lima}.

We are going to work in the cylinder model, so we identify $\h^2\equiv \mathbb D$, and $\partial_\infty\h^2\equiv \mathbb S^1$. For two points $\theta_1$, $\theta_2\in\partial_\infty\h^2$ we denote by $(\theta_1,\theta_2)\subset\partial_\infty\h^2$ the arc of $\partial_\infty\h^2$ joining $\theta_1$ with $\theta_2$ in a counterclockwise direction.

\begin{lemma}\label{lemma:an}
	Let $\theta_1$ and $\theta_2$ be two points in $\partial_\infty\h^2$. Let $\gamma\subset\h^2$ be the geodesic joining $\theta_1$ and $\theta_2$, and let $D$ be the connected component of $\h^2\backslash \gamma$ such that $(\theta_1,\theta_2)\subset\partial D$. Then there is a minimal compact annulus $A\subset \Pi^{-1}(D)$ such that the slab region $\{a<t<b\}$, with  $b-a<\sqrt{1+4\tau^2}\pi$, separates the two boundary components of $A$. Moreover, if $p$ is a point in $\Pi^{-1}(D)\cap \{a<t<b\}$ close enough to the asymptotic boundary $\partial_\infty D\times(a, b)$ in Euclidean distance, then we can take  $A$ containing $p$.
\end{lemma}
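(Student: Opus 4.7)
My plan is to build $A$ out of the compact minimal annulus $A_{h'}$ of Proposition~\ref{annulus} and transport it into position using isometries of $\PSL$ that fix the $t$-coordinate in the half space model. First, I would pass to the half space model via the isometry $\psi$ of Proposition~\ref{prop:iso}: in that model $\gamma$ is a geodesic of $\h^2$, $D$ is one of the two components of $\h^2\setminus\gamma$, and $(\theta_1,\theta_2)\subset\partial D$ is an open arc in $\partial_\infty\h^2=\R\cup\{\infty\}$.

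Next, I would apply Proposition~\ref{annulus} with $h'=b-a<\sqrt{1+4\tau^2}\pi$ to obtain a compact minimal annulus whose boundary components are two circles of hyperbolic radius $\rho$ centered at $(0,1)$ lying in the slices $\{t=0\}$ and $\{t=h'\}$. A vertical translation moves the boundary components to $\{t=a\}$ and $\{t=b\}$, so the slab $\{a<t<b\}$ separates them and the $\h^2$-projection is a hyperbolic annulus centered at $(0,1)$. The crucial observation is that the parabolic translations $(x,y,t)\mapsto(x+c,y,t)$ and the hyperbolic dilations $(x,y,t)\mapsto(x_0+\lambda(x-x_0),\lambda y,t)$ are isometries of $\PSL$ preserving the $t$-coordinate; together they generate a $2$-parameter group under which the $\h^2$-center of the annulus can be placed at any point of $\h^2$ while the boundary heights remain $a$ and $b$.

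For the existence part, I would pick an interior ideal point $\theta_0\in(\theta_1,\theta_2)$ and apply a hyperbolic dilation fixing $\theta_0$ with parameter $\lambda$ small enough; then $\Pi(A_{h'})$ shrinks in the Euclidean sense towards $\theta_0\in\partial D$ and thus lies inside $D$. The resulting minimal annulus $A\subset\Pi^{-1}(D)$ has its boundary components in $\{t=a\}$ and $\{t=b\}$, as required.

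For the moreover part, suppose $p\in\Pi^{-1}(D)\cap\{a<t<b\}$ is Euclidean-close to $\partial_\infty D\times(a,b)$, and pick an ideal point $\theta_0\in\partial_\infty D$ near $\Pi(p)$. The $2$-parameter family of $t$-preserving isometries produces, as the $\h^2$-center of the annulus is made to approach $\theta_0$, a family whose images sweep out a full Euclidean neighbourhood of the vertical ideal fiber $\theta_0\times(a,b)$ in $\PSL$. A continuity/intermediate-value argument would then show that, for $p$ sufficiently close to this fiber, some annulus in the family contains $p$; moreover, such an annulus is Euclidean-small near $\theta_0$ so its projection still lies in $D$. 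I expect the main technical point to be this sweeping step: one must verify that the family of isometric copies of $A_{h'}$ actually fills a Euclidean neighbourhood of the ideal fiber $\theta_0\times(a,b)$ without gaps and does so within $\Pi^{-1}(D)$, so that every $p$ sufficiently close to $\partial_\infty D\times(a,b)$ is attained.
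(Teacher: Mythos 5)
Your overall strategy (start from the annulus of Proposition \ref{annulus}, move it with $t$-preserving isometries, place it over a small neighbourhood of an interior ideal point of $(\theta_1,\theta_2)$, and sweep a neighbourhood of the ideal fiber for the ``moreover'' part) parallels the paper's, but there is a genuine gap at the very first step. The lemma, and its application in the proof of Theorem \ref{t:slab}, are stated in the cylinder model, and the change-of-model isometries $\phi,\psi$ of Proposition \ref{prop:iso} do \emph{not} preserve the $t$-coordinate when $\tau\neq 0$: they shift it by $\mp 4\tau\arctan(\cdot)$. Hence the cylinder-model slab $\{a<t<b\}$ does not correspond to the half space slab $\{a<t<b\}$, and an annulus whose boundary circles lie in the half space slices $\{t=a\}$ and $\{t=b\}$ need not have its boundary separated by the cylinder-model slab after mapping back: under $\phi$ the boundary heights become $a-4\tau\arctan\left(\tfrac{x}{y+1}\right)$ and $b-4\tau\arctan\left(\tfrac{x}{y+1}\right)$, which are not constant and in general enter $\{a<t<b\}$. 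Since you take $h'=b-a$ exactly, there is no slack to absorb this distortion, so the separation claim fails for $\tau\neq0$ (your argument is fine for $\tau=0$).

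This distortion is precisely what the paper's proof handles: it chooses $h$ with $b-a<h<\sqrt{1+4\tau^2}\,\pi$, shrinks the annulus toward the half-space ideal point $(0,0)$ by the dilation $(x,y,t)\mapsto(\lambda x,\lambda y,t)$, and checks that on the compact boundary curves the terms $4\tau\arctan\bigl(\lambda x_i/(1+\lambda y_i)\bigr)$ tend to $0$ uniformly as $\lambda\to0$, so that in the cylinder model the boundary curves lie in $\{-\epsilon<t<\epsilon\}$ and $\{h-\epsilon<t<h+\epsilon\}$; since $h-2\epsilon>b-a$, a vertical translation makes $\{a<t<b\}$ separate them, and the annulus is then moved into $\Pi^{-1}(D)$ by rotations about the axis, which are the only horizontal isometries of the cylinder model preserving $t$. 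Your parabolic translations and dilations about an arbitrary ideal point preserve $t$ only in half-space coordinates, so they cannot replace this last step without re-introducing the same distortion. To repair your argument you would need either to quantify the $\arctan$-term (on a Euclidean-small annulus near a finite ideal point it is close to a constant, which a vertical translation removes, but the remaining oscillation must be absorbed by taking $h'$ strictly larger than $b-a$), or to follow the paper: shrink toward $(0,0)$ and then rotate in the cylinder model. The ``moreover'' sweeping argument is fine in spirit (the paper treats it equally briefly), but it inherits the same issue because the heights it must respect are cylinder-model heights.
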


\begin{proof}
 Consider the minimal compact annulus $A_h$ in the half space model with boundary two curves contained in slices at height $0$ and $h$  with $b-a<h<\pi\sqrt{1+4\tau^2}$ constructed in Proposition \ref{annulus}.  The translated boundary curves contained in the slices can be parametrized as:
		\begin{align*}
		\alpha_\lambda=(\lambda x_1,\lambda y_1,0), (x_1,y_1)\in\gamma_1\subset\h^2, 
		\\ 
		\beta_\lambda=(\lambda x_2,\lambda y_2,h), (x_2,y_2)\in\gamma_2\subset\h^2.
	\end{align*}

	 Then consider the image by the isometry $\phi$ in Proposition \ref{prop:iso}, obtaining:
	\[ \phi\circ \alpha_\lambda=\left( 1-\dfrac{2(1+\lambda y_1 )}{1+2\lambda y_1+\lambda^2(x_1^2+y_1^2)},\dfrac{-2\lambda x_1}{\lambda^2 x_1^2+(1+\lambda y_1)^2},-4\tau \arctan\left( \frac{\lambda x_1}{1+\lambda y_1} \right) \right),   \]
	
	\[ \phi\circ \beta_\lambda=\left( 1-\dfrac{2(1+\lambda y_2 )}{1+2\lambda y+\lambda^2(x_2^2+y_2^2)},\dfrac{-2\lambda x_2}{\lambda^2 x_2^2+(1+\lambda y_2)^2},-4\tau \arctan\left( \frac{\lambda x_2}{1+\lambda y_2} \right)+h \right).  \]
	Hence the vertical gap  between the curves $\alpha_\lambda$ and $\beta_\lambda$ is equal to:
\[
v_\lambda=\max_{(x_1,y_1)\in\gamma_1}\left\lbrace -4\tau \arctan\left( \frac{\lambda x_2}{1+\lambda y_2} \right)+h\right\rbrace-\min_{(x_2,y_2)\in\gamma_2}  \left\lbrace-4\tau \arctan\left( \frac{\lambda x_1}{1+\lambda y_1} \right)\right\rbrace
\]

	As $\gamma_1$ and $\gamma_2$ are compact curves, there exist constants $M_1,\ m_2$ and $M_2$ such that $|x_i|<M_1$ and $0<m_2<y_i<M_2$ for $i=1,2$. Consequently $v_\lambda$ tends to $h$ when $\lambda$ tends to $0$. Given $\epsilon>0$, there exists $\lambda_0>0$ such that for all $\lambda<\lambda_0$, we have that $h-\epsilon<v_\lambda<h+\epsilon$.   
	For all $\epsilon>0$, we can choose  $\lambda>0$ small enough such that $\Pi(A_h)$ is contained in a small neighbourhood of the ideal point $(-1,0)$, and $\phi\circ\alpha_\lambda$ and $\phi\circ\beta_\lambda$ are contained in the regions $\{-\epsilon<t<\epsilon \}$ and $\{h-\epsilon<t<h+\epsilon \}$, respectively. Therefore, we can send the annulus $A_h$ to the region $\Pi^{-1}(D)$  by means of rotations with center the origin, and then  translate the annulus vertically such that the slab region $\{a<t<b\}$ separates the boundary components of $A_h$.	Observe that if the point $p$ is close enough to $\partial_{\infty}D\times (a,b)$, then by choosing the appropriate $\lambda>0$ and the rotation with center the origin we can ensure that $p\in A_h$.    
\end{proof}	

\begin{proof}[Proof of Theorem \ref{t:slab}]	
	
	We identify the asymptotic boundary of $\h^2$ with $\mathbb{S}^1$. The asymptotic boundary of the graphs $G_1$ and $G_2=G_1+(0,0,\sqrt{1+4\tau^2}\pi-\epsilon)$ are curves that can be expressed as the graphs of  continuous functions $g_1,g_2:\mathbb{S}^1\to \R$. Let $M$ be the properly immersed minimal surface contained between $G_1$ and $G_2$ with possibly finite boundary.

	\begin{claim}
		For all $0<\epsilon_1<\frac{\epsilon}{4}$ there exist finitely many   regions $D_i\subset \h^2$ and points $\theta_i\in \partial_\infty D_i$ cyclically ordered, such that:
		\begin{enumerate}
		\item $D_i\cap D_{i+1}\neq \emptyset$,	
		\item $|g_j(\theta_{i})-g_j(\theta)|<\epsilon_1$, for all $\theta\in \partial_\infty D_i$, $j=1,2$,
			\item $M\cap \Pi^{-1}(D_i)\subset \Pi^{-1}(D_i)\cap \{ g_1(\theta_i)-2\epsilon_1<t<g_2(\theta_i)+2\epsilon_1 \}$, 
		\item  $\bigcup_i \partial_\infty D_i=\partial_\infty \h^2$ and
		\item  $\partial M\subset \PSL\backslash\left( \bigcup_i  \Pi^{-1}(D_i)\right) $.
	 	
		\end{enumerate}

	\end{claim}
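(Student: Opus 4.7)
The plan is to deduce this preparatory claim from uniform continuity of $g_1,g_2$ on the compact circle $\partial_\infty\h^2$, continuity up to the boundary of the entire graphs $G_1,G_2$, and the fact that $\partial M$ is compact because $M$ has finite topology. The minimal annuli of Section 4 are not needed at this stage; they will enter only in the next step of Theorem \ref{t:slab}, where one upgrades the slab bound (3) into the multigraph conclusion.

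Write $G_j=\{(x,y,u_j(x,y)):(x,y)\in\h^2\}$ for functions $u_j:\h^2\to\R$ with $u_2=u_1+\sqrt{1+4\tau^2}\pi-\epsilon$. By hypothesis each $u_j$ is bounded with bounded gradient and extends continuously to $\partial_\infty\h^2=\mathbb{S}^1$ with boundary values $g_j$. Using uniform continuity of $g_j$ on $\mathbb{S}^1$, I would pick a finite cyclically ordered cover $\{I_i\}_{i=1}^n$ of $\mathbb{S}^1$ by open arcs with $I_i\cap I_{i+1}\neq\emptyset$, such that the oscillation of each $g_j$ on $\overline{I_i}$ is strictly less than $\epsilon_1$, and pick $\theta_i\in I_i$.

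I would then define each $D_i\subset\h^2$ as a thin cap of $\h^2$ whose closure in the compactified disk meets $\partial_\infty\h^2$ exactly in $\overline{I_i}$, shrunk toward the ideal circle until: (a) $|u_j(q)-g_j(\theta_i)|<\epsilon_1$ for every $q\in\overline{D_i}$ and $j=1,2$, which is possible by continuity of $u_j$ up to $\partial_\infty\h^2$ together with the oscillation bound on $I_i$; (b) $\overline{D_i}\cap\Pi(\partial M)=\emptyset$, possible because $\partial M$ is compact by the finite topology assumption, so $\Pi(\partial M)$ is a fixed compact subset of $\h^2$ that any sufficiently thin cap avoids; (c) $D_i\cap D_{i+1}\neq\emptyset$, arranged via the overlap of $I_i$ and $I_{i+1}$. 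From $\partial_\infty D_i=I_i$, the oscillation bound, and (a)--(c), conditions (1), (2), (4) and (5) of the claim are immediate.

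It remains to verify (3). For any $p=(x,y,t)\in M\cap\Pi^{-1}(D_i)$, the fact that $M$ lies in the region bounded by $G_1$ and $G_2$ gives $u_1(x,y)\le t\le u_2(x,y)$; combined with (a) this yields
\[g_1(\theta_i)-\epsilon_1<t<g_2(\theta_i)+\epsilon_1,\]
which is strictly interior to the slab of (3). Note that this slab has vertical width $\sqrt{1+4\tau^2}\pi-\epsilon+4\epsilon_1$, which is strictly less than $\sqrt{1+4\tau^2}\pi$ by $\epsilon_1<\epsilon/4$: this is precisely the margin that will later unlock the barriers of Lemma \ref{lemma:an}. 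The only mildly delicate ingredient is the continuous extension of $u_j$ to $\partial_\infty\h^2$ with boundary values $g_j$, which is built into the hypothesis that the asymptotic boundary of $G_j$ is the graph of $g_j$ and that the tangent planes of $G_j$ are uniformly away from vertical.
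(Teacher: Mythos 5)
Your argument is correct and is essentially the paper's: both rest on uniform continuity of $g_1,g_2$ and compactness of $\partial_\infty\h^2$ to produce finitely many overlapping ideal arcs with small oscillation, regions of $\h^2$ close to those arcs avoiding the compact boundary $\partial M$, and the inequality $u_1\le t\le u_2$ together with convergence of the graphs to their asymptotic values to obtain the slab condition (3). The only cosmetic differences are that the paper constructs the $D_i$ sequentially as geodesic half-planes with a uniform lower bound $\delta_i\ge\delta_0>0$ (the shape it later feeds into Lemma \ref{lemma:an}), whereas you take thin caps over a direct finite cover of the circle, and that the compactness of $\partial M$ is really the standing hypothesis of finite (compact) boundary rather than a consequence of finite topology.
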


 Consider $\theta_1$ a point in $\partial_\infty \h^2$.  There exists $\delta_1>0$ small enough such that if   $\gamma_1$ is the geodesic joining the points $\theta_1-\delta_1$, $\theta_1+\delta_1$, and $D_1$ is the  region of $\h^2\backslash \gamma_1$ that contains $\theta_1$, then $|g_j(\theta_1)-g_j(\theta)|<\epsilon_1$ for all $\theta$ in the arc $(\theta_1-\delta_1,\theta_1+\delta_1)$ and $j=1,2$. Choosing $\delta_1$ small enough we have that $M\cap \Pi^{-1}(D_1)\subset \Pi^{-1}(D_1)\cap \{ g_1(\theta_1)-2\epsilon_1<t<g_2(\theta_1)+2\epsilon_1\}$ and  $\partial M\subset \PSL\backslash \Pi^{-1}(D_1)$.

 After that, we choose $\theta_2=\theta_1+\frac{\delta_1}{2}$, and find another $\delta_2$
 such that if   $\gamma_2$ is the geodesic joining the points $\theta_2-\delta_2$, $\theta_2+\delta_2$, and $D_2$ is the  region of $\h^2\backslash \gamma_2$ that contains $\theta_2$, then $|g_j(\theta_2)-g_j(\theta)|<\epsilon_1$ for all $\theta$ in the arc $(\theta_2-\delta_2,\theta_2+\delta_2)$, 
${M\cap \Pi^{-1}(D_2)}\subset \Pi^{-1}(D_2)\cap \{ g_1(\theta_2)-2\epsilon_1<t<g_2(\theta_2)+2\epsilon_1\}$, and $\partial M\subset \PSL\backslash \Pi^{-1}(D_2)$.

  Continuing this process, we construct the sequences  $D_i$ and $\theta_i$. Note that, since the functions $g_j$ are uniformly continuous  we can choose all the $\delta_i>\delta_0$, for some $\delta_0>0$.  As $\partial_\infty \h^2 \equiv \mathbb S^1$ is compact we can ensure that there exist  finitely many $D_1,\ldots,D_n$ and $\theta_1,\ldots,\theta_n$ such that $\bigcup_i \partial_\infty D_i=\partial_\infty \h^2$ and the claim is proved.  
  
   Consider a disk $\mathcal D$  of $\h^2$ centered at the origin  with radius large  enough such that $\h^2\backslash\mathcal D\subset \bigcup_i D_i$. Using Lemma \ref{lemma:an}, we can ensure that there exists an annulus $A_1$ that is contained in the region $\Pi^{-1}(D_1\backslash \mathcal D)$ and whose boundary curves are one above $\{t=g_2(\theta_1)+2\epsilon_1\}$ and the other below $\{t=g_1(\theta_1)-2\epsilon_1\}$.  We can rotate $A_1$  with respect to the origin, keeping it inside $\Pi^{-1}(D_1\backslash\mathcal D)$, until we arrive to the region $\Pi^{-1}((D_1\cap D_2)\backslash \mathcal D)$. Observe that the boundaries of the rotated annuli do not intersect the surface $M\cap \Pi^{-1}(D_1)$. Now apply the vertical translation \[(x,y,t)\mapsto(x,y,t+(g_j(\theta_2)-g_j(\theta_1)))\]  
to this annulus. One of the boundary components of the translated annulus is  above $\{t=g_2(\theta_2)+2\epsilon_1\}$ and the other is below $\{t=g_1(\theta_2)-2\epsilon_1\}$. Observe also that the boundaries of the translated annuli do not intersect the surface since  $|(g_j(\theta_2)-g_j(\theta_1))|<\epsilon_1$. Moreover (as in Lemma \ref{lemma:an}) if $p\in M\cap \Pi^{-1}(D_2)$ is a point close enough to the asymptotic vertical boundary $\partial_\infty\h^2\times\R$, we can translate the annulus toward a point in $\partial_\infty\h^2\times\R$ and rotate  it with respect to the origin until the annulus contains the point $p$. Again the boundaries of all of  these translated annuli do no intersect $M$ because the gap  between the boundaries of the annulus is controlled.

 We call this translated annulus $A_2$. We can iterate these steps to obtain a family of annuli $A_i$ such that:
\begin{enumerate}
	\item $A_i\subset \Pi^{-1}(D_i\backslash\mathcal D), $
	
	\item $\partial A_i\cap M=\emptyset,$
	\item all the  $A_i$ are isometric and there are  smooth maps 
	\[{H_i:[0,1]\to \{A:\ A \text{ minimal annuli in } \PSL\}},\ i=1,\dots,n,\]
	 such that $H_i(0)=A_i$, $H_i(1)=A_{i+1}$ and  $\partial H_i(t) \cap M=\emptyset$.

\end{enumerate}

To conclude the proof we will adapt the ideas in the proof of Lima's Slab Theorem \cite{Lima}. We will indicate the dissimilarities with Lima's proof using his notation. Let $\Sigma$ be a properly immersed minimal surface contained between $G_1$ and $G_2$ with possibly finite boundary ($M=\Sigma$ in Lima's notation). Assume that $\Sigma$  is simply connected ($\partial\Sigma=\emptyset$) or an annulus ($\partial\Sigma\neq \emptyset$). We can do this because we are interested only in the ends of the surface and $\Sigma$ has finite topology.

 Choose a compact subset $B$ (or a metric ball) in $\PSL$ such that:

\begin{enumerate}
	
	\item $\partial \Sigma\subset B$,
\item  $A_i\subset B$ for all $i\in \{1, ...,\ k\}$,
\item $\Sigma\backslash B\subset \bigcup_i  \Pi^{-1}(D_{i}\backslash \mathcal D)$,
\item $\Sigma\cap B$ has a finite number of connected components.

\end{enumerate}

Let  $p_0$ be a point  in $A_1\cap \Sigma\subset B $. Consider a compact subset $K\supset B$ such that any two points of $\Sigma\cap B$ can be joined by a path in $K$.  
	
We will show that, if $p\in \Sigma$ is far enough from $K$, then the tangent plane $T_p\Sigma$ can not be vertical, hence $\Sigma$ is a miltigraph.

To this end, in Lima's proof there are two steps:
\begin{itemize}
	\item Step 1: If $T_p\Sigma$ is vertical, and $\mathcal M$ is a tall rectangle tangent to $\Sigma$ at $p$, then $\Sigma\cap K\neq \emptyset$.
	\item Step 2: There exists $ c>0$ such that, if $d(p,K)>c$, then $\mathcal M\cap K=\emptyset$.  
\end{itemize} 	
 Step 2 easily applies in our case, so we will deal with Step 1.	

Let $\mathcal M$ be a tall rectangle tangent to $\Sigma$ at $p$. Assume that $\mathcal M\cap K=\emptyset$, as in Lima's proof, $\mathcal M$ separates the region between the two entire graphs in two connected components, $\mathcal M(+)$ and $\mathcal M(-)$, and assume that $K\subset M(+)$. There is a neighborhood $U$  of $p$ in $\Sigma$ such that $U\cap \mathcal M$ consists of an equiangular system of at least $2$ curves through $p$. Let $\sigma_1$ and $\sigma_2$ distinct connected component of $U\backslash \Sigma$, contained in $\mathcal M(+)$.

Applying claim 1 in Lima's proof we have that $\sigma_1$ and $\sigma_2$ are contained in  distinct connected component of $\Sigma\cap \mathcal M (+)$, $\Sigma_1$ and $\Sigma_2$.

Let $\mathcal M_\lambda$ be a small hyperbolic translation of $\mathcal M$ such that $\mathcal M_\lambda$ intersects  $\sigma_i$ at $x_i\in \Sigma_i$, $i=1,2$.
We will show that  $S_{x_i}=\Sigma_i\cap \mathcal M_\lambda$ is non compact, $i=1,2$.
 Assume the contrary. Using Lima's ideas we have that both, $S_{x_1}$ and $S_{x_2}$, cannot be compact (see claim 2 in Lima's proof), so we will assume that $S_{x_1}$ is non compact, $S_{x_2}$ is compact and $S_{x_2}\cup \partial \Sigma$ bounds an immersed annulus in $\mathcal M_\lambda(+)$. In this case, we can find a point $z\in S_{x_1}$ arbitrarily far from $\mathcal M$ and close enough to the asymptotic boundary in Euclidean distance.  We have that  $z\in \Pi^{-1}(D_i\backslash \mathcal D)$ for some $i\in \{1,\dots,n\}$.   We can translate the annulus $A_i$ toward a point in the asymptotic boundary and rotate it  with respect to the origin inside the region $\Pi^{-1}(D_i\backslash \mathcal D)$ until  $z$ is contained in the translated and rotated annulus and the annulus is contained in $\mathcal M(+)$.  The boundaries of all of  the translated and rotated annuli do no intersect $\Sigma$ because the gap  between the boundaries of the annuli is controlled when  we translate toward a point in the asymptotic boundary. We call this translated annulus $A_z\subset \mathcal M(+)$. Then, using the properties of the annuli $A_k$, we find a continuous map $f:[0, l]\to \{A:\ A \text{ minimal annuli in } \PSL\}$ such that $f(0)=A_z$, $f(l)=A_{1}$ and  $\partial f(t) \cap \Sigma=\emptyset$. Using the Dragging Lemma in  \cite{CHR}  we will find a path in $\Sigma\cap \mathcal M(+)$ joining $z$ with a point $q\in \Sigma\cap B$.
 Now, join $q$ to a point of $\partial\Sigma$ by a path in $\Sigma\cap K$ and let $\beta$ be the union of these paths. As $K\subset \mathcal M(+)$, then $\beta\subset\Sigma\cap \mathcal M (+)$, which contradicts that $\Sigma_1$ and $\Sigma_2$ are different connected components.

To conclude the Step 1, as $\Sigma_j$, $j=1,2$, is non compact, as before, we consider the point $z_j\in \Sigma_j$ close enough to the asymptotic boundary such that $z_j$ is contained in some $\Pi^{-1}(D_{i_j}\backslash\mathcal D)$, $i_j\in\{1,\dots,n\}$, the annulus $A_{z_j}$ contains the point $z_j$ and $A_{z_j}\subset\mathcal M(+)$. Again, using the properties of the annuli constructed before, and applying the Dragging Lemma, we  find a path in $\Sigma\cap \mathcal M(+)$ connecting $z_j$ with a point $q_j\in\Sigma \cap B$.  Connecting $q_1$ with $q_2$ by a path in $\Sigma\cap K$ we achieve a contradiction because $z_1$ and $z_2$ belong to different  connected components of $\Sigma\cap \mathcal M_ (+)$.    

Step 1 with Step 2 show that each end of $\Sigma$ is a multigraph. For the embedded case we also apply the same ideas of Lima's Theorem. 
\end{proof}

\begin{remark}
	The proof is also valid when $\tau=0$. We have shown here that this estimate is also valid when the two minimal graphs are not necessarily horizontal minimal planes. 
\end{remark}

\section{ Jenkins-Serrin constructions}

\subsection{Twisted Scherk examples}
In this section we follow the ideas developed in~\cite{RP} and show that a similar construction for properly embedded minimal surfaces in $\h^2\times\R$  with  finite total curvature  works in the ambient space $\PSL$.   Such surfaces also can be seen as  solutions of an asymptotic Plateau problem for a specific curve $\Gamma$ composed of vertical straight lines in $\partial_\infty\h^2\times\R$ and horizontal geodesics in {$\h^2\times\{\pm \infty \}$}, see Figure~\ref{polygon}. Consider the cylinder model for $\PSL$. Let $0$ be the origin and denote by $\overline{pq}$ the geodesic segment joining the points $p,q\in \h^2$. Let $\theta_1,\ldots,\theta_{2n+1}$, $n\geq 1$ be ideal points in $\partial\h^2 \equiv \mathbb{S}^1 $ cyclically ordered. Consider $\Omega$  the polygonal domain with edges  $\overline{0\theta_1}$,  $\overline{\theta_1\theta_2},\ldots,  \overline{\theta_{2n}\theta_{2n+1}}$ and  $\overline{\theta_{2n+1}0}$. Consider  the Dirichlet problem for the minimal graph equation  with asymptotic boundary values $+\infty$ over $A_1=\overline{0\theta_1}$ and over $A_{i+1}=\overline{\theta_{2i}\theta_{2i+1}}$ for $i=1,\ldots n$ and $-\infty$ over $B_i=\overline{\theta_{2i-1}\theta_{2i}}$ for $i=1,\ldots,n$ and $B_{n+1}=\overline{\theta_{2n+1}0}$. This problem is known as a Jenkins-Serrin problem, and has solution if and only if there exist horocycles $H_i$ for each ideal point $\theta_i$, $i=1,\ldots, 2{n+1}$ such that $\alpha(\Omega)=\beta(\Omega)$, $2\alpha (\mathcal P)<\gamma(\mathcal P)$ and $2\beta(\mathcal P)<\gamma(\mathcal P)$, for all polygonal domain $\mathcal P$ inscribed in $\Omega$, where $\alpha (\mathcal P)=\sum_i|A_i \cap \mathcal P|$, $\beta (\mathcal P)=\sum_i|B_i \cap \mathcal P|$,  $\gamma (\mathcal P)=|\mathcal P|$ and $|\cdot|$ denotes the hyperbolic length  outside the  horocycles $H_i$,  see for instance \cite{MRR,Me,Younes}. Assume that the ideal points are distributed so that the Jenkins-Serrin problem has a solution, and let $\Sigma$ be the corresponding graph.

  Then, after considering the rotation by $\pi$ over the straight line $\{0\}\times\R$, we can extend $\Sigma$ to a complete minimal surface. This surface has asymptotic boundary an admissible polygon at infinity, i.e., a closed curve composed by $2n+1$ geodesics at $\mathbb{H}\times \{+\infty\}$, $2n+1$ geodesics at  $\mathbb{H}\times \{-\infty\}$  and $4n+2$ vertical straight lines joining the ideal points of the geodesics in $\partial_\infty \h^2\times\R$, see Figure~\ref{polygon}. By Theorem 8 in \cite{HMR}, these surfaces have finite total curvature. Moreover, if the angle that   $\overline{0\theta_1}$ and $\overline{\theta_{2n+1}0}$ make at $0$ is less than or equal to $\pi$, then the surfaces are embedded since they are the union of vertical minimal graphs that do no intersect each other except at  their common boundary $\{0\}\times\R$.

\begin{figure}[htb]
	
	\includegraphics[height=5cm]{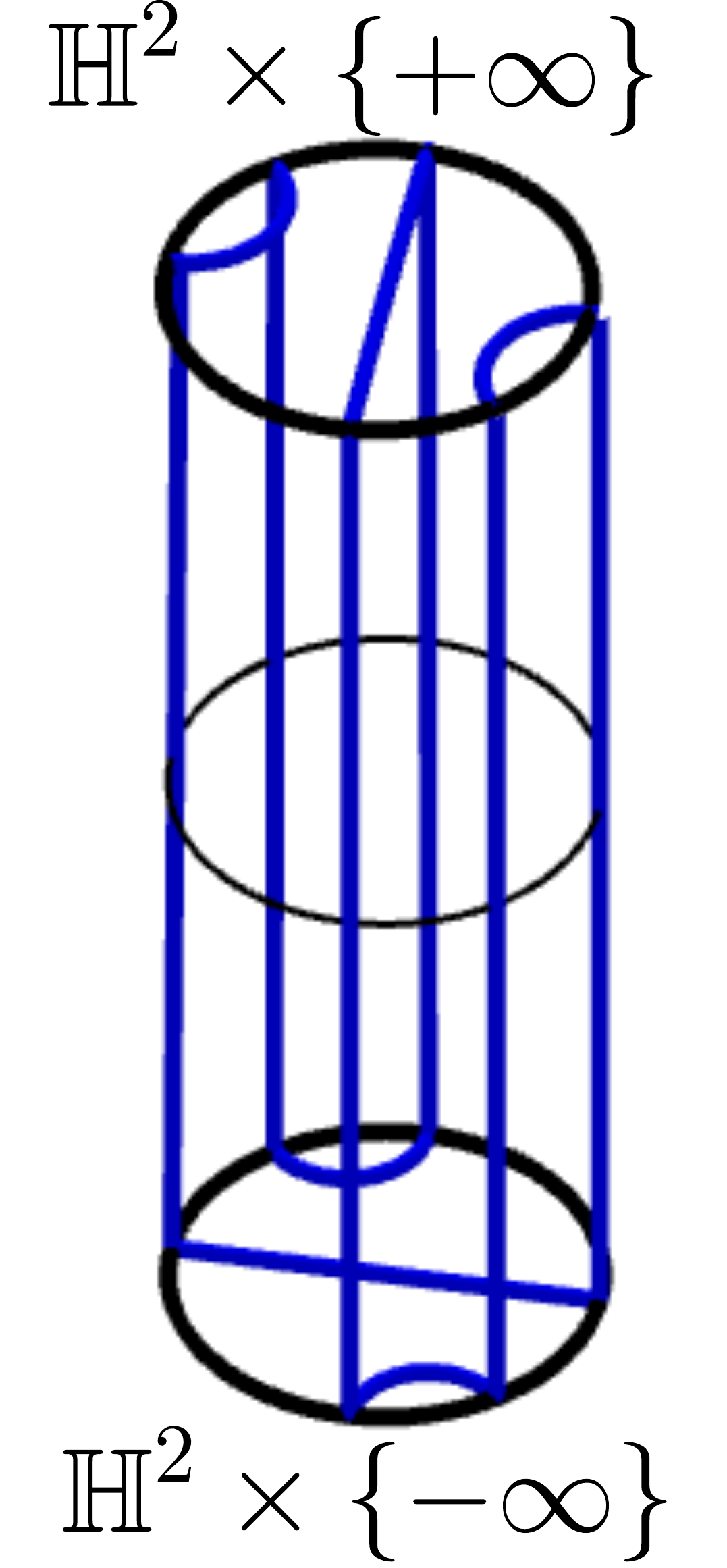} 
	\caption{The admissible polygon at infinity for $n=1$.  }\label{polygon}
\end{figure}


\subsection{Helicoidal Scherk examples}
In this section we show that the same construction in \cite{RT} for minimal surfaces in $\h^2\times \R$ can be adapted to $\PSL$. 
Consider the cylinder model for the space $\PSL$.

Let $\theta_1=1$, $\theta_2=e^\frac{i \pi}{2n}$, $n\geq 1$ and $0$ be the origin. Let $\Omega$ be the region bounded by the triangle with edges  the geodesic arcs $\overline{0\theta_1}$, $\overline{\theta_1\theta_2}$ and $\overline{0\theta_2}$. Consider the unique solution to the Jenkins-Serrin problem with boundary values $0$ over $\overline{0\theta_1}$, $+\infty$ over $\overline{\theta_1\theta_2}$ and $h$ over $\overline{0\theta_2}$ for some $h>0$, see for instance \cite{MRR,Me}. As the reflections over horizontal geodesics are isometries, by Schwartz reflection principle  we can consider successive symmetries over the horizontal geodesics, obtaining a simply connected surface $\widetilde{M}_{nh}$ with boundary the vertical straight line $\{0 \}\times \R$. Then after considering the rotation of angle $\pi$ over the straight line $\{0\}\times\R$ we can extend $\widetilde{M}_{nh}$ to a complete minimal surface $M_{nh}$ invariant by the vertical translation $(0,4nh)$ and the screw motion  obtained by composing the rotation of angle $\frac{\pi}{n}$ around $0$ with  the vertical translation $(0,2h)$.
\begin{proposition}
Given $n\geq 1$ and $h>0$,	the surface $M_{nh}$ is a complete embedded minimal surface in $\PSL$  and it is non proper.
\end{proposition}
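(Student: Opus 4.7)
The plan is to verify, in order, completeness, embeddedness, and the crucial non-properness, by exploiting the piecewise structure of $M_{nh}$. By construction, $M_{nh}$ is the union of a countable family of Jenkins--Serrin graphs $\{P_j\}_{j\in\Z}$, each obtained from the original graph $\Sigma$ over $\Omega$ by iterated Schwarz reflections across horizontal geodesics through the origin (these lie in the minimal umbrellas $\{t=\mathrm{const}\}$ and are genuine geodesics of $\PSL$, so the induced $\pi$-rotations are isometries) together with the rotation of angle $\pi$ around the axis $\{0\}\times\R$. Each $P_j$ is a minimal graph over the triangle $T_j=\{0,\theta_j,\theta_{j+1}\}$ with $\theta_j=e^{i(j-1)\pi/(2n)}$, and the $4n$ triangles $T_1,\dots,T_{4n}$ tile the regular ideal $4n$-gon in $\h^2$ with vertices $\theta_1,\dots,\theta_{4n}$, meeting only along the radial geodesics $\overline{0\theta_j}$; the $4n$ ``lens'' regions between this $4n$-gon and $\partial_\infty\h^2$ are not in the projection of $M_{nh}$.

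Completeness and embeddedness then follow routinely from this description. For completeness, each piece $P_j$ is closed in $\PSL$ (the graph extends to its two finite radial edges and blows up to $\pm\infty$ along its ideal edge, which is at infinite intrinsic distance), and the locally finite union of closed complete pieces glued smoothly by the successive reflections and by the $\pi$-rotation is complete. For embeddedness, pieces over distinct triangles $T_j$ are disjoint in $\PSL$ apart from the radial gluings; within any fixed sector the invariance of $M_{nh}$ under vertical translation by $4nh$ produces a discrete family of sheets separated vertically by at most $4nh$, and the gluing along $\{0\}\times\R$ is smooth because each axial tangent plane contains the $t$-direction and is hence preserved under the $\pi$-rotation.

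For the non-properness, which is the heart of the statement, the plan is to exhibit a sequence on $M_{nh}$ converging in $\PSL$ to a point lying on the vertical totally geodesic plane $\Pi^{-1}(\overline{\theta_1\theta_2})$ but not on $M_{nh}$. Fix $q^*$ in the interior of the hyperbolic geodesic $\overline{\theta_1\theta_2}\subset\h^2$ and a sequence $q_\ell\to q^*$ with $q_\ell\in T_1^\circ$; the Jenkins--Serrin condition forces $u(q_\ell)\to+\infty$. Setting $k_\ell=-\lfloor u(q_\ell)/(4nh)\rfloor$, the points $p_\ell=(q_\ell,\,u(q_\ell)+4nh k_\ell)$ lie on the vertical translate of $P_1$ by $4nh k_\ell$, hence in $M_{nh}$, and have $t$-coordinate in $[0,4nh)$, so a subsequence converges to $p^*=(q^*,t^*)\in\PSL$ with $t^*\in[0,4nh]$.

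The main obstacle is to verify that $p^*\notin M_{nh}$. This is the decisive geometric point: since $q^*$ lies on the ideal edge $\overline{\theta_1\theta_2}$, which is an edge of $T_1$ only (the other side of this geodesic lies in one of the lens regions outside $\bigcup_j T_j$), does not belong to any $T_j^\circ$, and does not lie on $\{0\}\times\R$, the only pieces of $M_{nh}$ whose closure could contain $p^*$ are $P_1$ and its vertical translates, all of which blow up at $q^*$ and so attain no finite $t$-value there. Hence $p^*\in\PSL\setminus M_{nh}$ is a limit of points of $M_{nh}$ in $\PSL$, proving that $M_{nh}$ is not proper.
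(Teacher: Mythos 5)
Your completeness and non-properness arguments are fine, and the non-properness part follows the same route as the paper: over the fixed sector $\widetilde\Omega_1=\overline{\Omega}\times\R$ the sheets of $M_{nh}$ accumulate on the vertical plane $\overline{\theta_1\theta_2}\times\R$, which contains no point of $M_{nh}$ since the graphs blow up along the ideal edge and the gluing curves project onto the radial edges and the axis; your explicit sequence $p_\ell$ simply spells out the paper's closing sentence, and it correctly uses only the invariance under the vertical translation by $4nh$.

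The gap is in the embeddedness step, which is exactly where the paper's proof does its actual work. Over the triangle $T_1$ the surface $M_{nh}$ carries two families of sheets: the sheets of $\widetilde M_{nh}$ (boundary values $4knh$ on $\overline{0\theta_1}$ and $h+4knh$ on $\overline{0\theta_2}$, $k\in\Z$) and the sheets obtained from the antipodal sector by the rotation of angle $\pi$ about $\{0\}\times\R$. The invariance under vertical translation by $4nh$, which is all you invoke, only shows that each family separately consists of pairwise disjoint translates; it says nothing about whether a rotated sheet crosses a sheet of $\widetilde M_{nh}$ lying over the same triangle, and your assertion of a ``discrete family of sheets separated vertically by at most $4nh$'' is exactly the unproved point. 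The missing step --- the content of the paper's proof --- is to compute the boundary values of the rotated sheets, namely $2nh+4knh$ on $\overline{0\theta_1}$ and $(2n+1)h+4knh$ on $\overline{0\theta_2}$, and to conclude (by uniqueness of the Jenkins--Serrin solution, or by the maximum principle comparing boundary data) that they are the vertical translates of the fundamental piece by $2nh+4knh$; hence $M_{nh}\cap\widetilde\Omega_1$ is precisely the family $P_1+(0,0,2nhk)$, $k\in\Z$, of pairwise disjoint graphs (separation $2nh$, not $4nh$), and embeddedness follows. With that computation inserted your argument closes; without it embeddedness is not established.
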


\begin{proof}
	Let $\Omega_i$ be the domain obtained by rotating $\Omega$ around the origin by an angle $\frac{\pi}{2n}(i-1)$, and let $\widetilde{\Omega}_i=\overline{\Omega_i}\times\R$, $i=1,\dots, 4n$. We will prove that $M_{nh}\cap\widetilde{\Omega}_{1} $ has not self-intersections.
	We have that $\widetilde{M}_{nh}\cap \widetilde{\Omega}_1$ consists of a union of graphs with boundary values:
\[	
	\left\lbrace\begin{array}{l}
	4 k n h\ \text{, over } \overline{0\theta_1}, \\ 
	h+4knh\ \text{, over } \overline{0\theta_2},\\
	+\infty\ \text{, over } \overline{\theta_1\theta_2},\\
	\end{array}\right.
	 	\]
	with $k\in\Z$.  Then, we consider the other union of graphs obtained by  the rotation of angle $\pi$ over the straight line $\{0\}\times\R$, with boundary values:
\[	
	\left\lbrace\begin{array}{l}
	2nh+4knh\ \text{, over } \overline{0\theta_1}, \\ 
	(2n+1)h+4knh\ \text{, over } \overline{0\theta_2},\\
	+\infty\ \text{, over } \overline{\theta_1\theta_2},\\
	\end{array}\right.
	 	\]
	with $k\in\Z$. Hence, $M_{nh}\cap\widetilde{\Omega}_1$ consists of the fundamental piece and its vertical translation by the vector $k(0,2nh)$, $k\in\mathbb{Z}$. We get that $M_{nh}\cap\widetilde{\Omega}_1$ has no self-intersections, and repeating the argument we have the same for $M_{nh}\cap\widetilde{\Omega}_i$. We conclude that the surface $M_{nh}$ is embedded. Note that $M_{nh}\cap\widetilde{\Omega}_1$ accumulates in the vertical plane $\overline{\theta_1\theta_2}\times\R$, and then the surface $M_{nh}$ is not proper.	
\end{proof}

We can also generalize this construction in the following way: Let  $\theta_1=1$ and $\theta_2=e^\frac{i \pi}{2n}$, let $c$ be the shortest arc in $\partial_\infty\mathbb{H}$ joining $\theta_1$ and $\theta_2$, and let  $q_i,\ ... q_m$ be points in the arc $c$ cyclically ordered. Let $\Omega$ be the region bounded by the geodesics arcs $\overline{0\theta_1}$, $\overline{\theta_1q_1}$,...,$\overline{q_iq_{i+q}}$,...,$\overline{q_m\theta_2}$, $\overline{\theta_20}$. Assume that  $\Omega$ satisfies the Jenkins-Serrin conditions with boundary values 0 over $\overline{0\theta_1}$, $h$ over $\overline{\theta_20}$ and alternating $\pm\infty$ over the rest of the edges. Then after considering successive symmetries over the horizontal geodesics and the vertical straight line $\{0\}\times \R$ we obtain a simply connected complete minimal surface embedded which is non proper.
\newline


\begin{thebibliography}{OO}




\bibitem{CasMan} J.\ Castro-Infantes, J.\ M.\ Manzano.  
\newblock Genus one minimal $k-$noids and saddle tower in $\h^2\times\R$.
\newblock Preprint available at	arXiv:2001.07028 [math.DG].



\bibitem{CM} T.\ H.\ Colding, W.\ P.\ Minicozzi II.
\newblock The Calabi-Yau conjectures for embedded surfaces.
\newblock \emph{ Ann.\ of Math.\,} 167 (2008), no. 1, 211--243.



	\bibitem{CHR} P.\ Collin, L.\ Hauswirth, H.\ Rosenberg. 
	\newblock Properly immersed minimal surfaces in a slab of $\mathbb{H}\times\R$, $\mathbb{H}$ the hyperbolic plane.
	\newblock \emph{Arch. Math.\,}
	104 (2015), no. 5, 471--484.
	
	
	
		\bibitem{CHN} P.\ Collin, L.\ Hauswirth, M.\ Nguyen. 
	\newblock Construction of minimal annuli in  ${\widetilde{\mathrm{PSL}}_2(\mathbb{R},\tau)}$ via a variational method.
	\newblock \emph{Preprint}.
	
	
	
	\bibitem{C} B.\ Coskunuzer.
	\newblock Minimal surfaces with arbitrary topology in $\h^2\times\R$.
	\newblock Preprint available at arXiv:1404.0214v2 [math.DG].
	

	


	
 \bibitem{Dan} B.\ Daniel.
 \newblock Isometric immersions into 3-dimensional homogeneous manifolds.
 \newblock \emph{Comment.\ Math.\ Helv.\,} 82 (2007), no. 1, 87--131.	
	
	
	\bibitem{FMMR} L.\ Ferrer, F.\ Martín, R.\ Mazzeo, M.\ Rodríguez. 
	\newblock  Properly embedded minimal annuli in $\h^2\times\R$.
	\newblock \emph{Math.\ Ann.\,} 
	 375 (2019), no. 1-2, 541--594.
		
		
		
		
	
	\bibitem{FP} A.\ Folha, C.\ Peñafiel. 
	\newblock Minimal graphs in ${\widetilde{\mathrm{PSL}}_2(\mathbb{R},\tau)}$.
	\newblock \emph{Mat.\  Contemp.\,}
	43 (2014), 111--132.
			
	\bibitem{HMR} L.\ Hauswirth, A.\ Menezes, M.\ Rodríguez.
	\newblock On the characterization of minimal surfaces with finite total curvature in $\mathbb{H}^2\times\mathbb{R}$ and $\widetilde{PSL}_2(\mathbb{R})$.
	\newblock \emph{Calc. Var. Partial Differential Equations,}
	58 (2019), no. 2, Art 80, 24 pp.
			
			
			
	\bibitem{KM} B.\ Kloeckner, R.\ Mazzeo.
	\newblock On the asymptotic behavior of minimal surfaces in $\h^2\times\R$.
	\newblock 	\emph{Indiana Univ.\ Math.\ J.\,}
	66 (2017), no. 2, 631--658.
				
				
	\bibitem{KMR} P.\ Klaser, A.\ Menezes, A.\ Ramos.
	\newblock On the asymptotic Plateau Problem for area minimizing surfaces in $\mathbb{E}(-1,\tau)$. 
	\newblock  \emph{Ann.\  Global Anal.\  Geom.\,} 58 (2020), no. 1, 1--17.
				
	
	
		 \bibitem{Man} J.\ M. Manzano.
	 \newblock On the classification of Killing submersions and their isometries.
	 \newblock \emph{Pac.\ J. Math.\,} 270 (2014), no. 2, 367--692.
	
	\bibitem{MMR} F.\ Martín, R.\ Mazzeo, M. Rodríguez.
	\newblock Minimal surfaces with positive genus and finite total curvature in $\mathbb{H}^2\times\mathbb{R}$.
	\newblock \emph{Geom.\ Top.\,} 18 (2014), 141--177.
				
	\bibitem{MRR} L.\ Mazet, M. Rodr\'{i}guez, H.\ Rosenberg.
	\newblock The Dirichlet problem for the minimal surface equation --with possible infinite boundary data-- over domains in a Riemannian surface.
	\newblock \emph{Proc.\  London Math.\ Soc.\,} (3) 102 (2011), no. 6, 985--1023.
	
	\bibitem{MPR} W.\ H.\ Meeks III, J.\ Pérez, A.\ Ros.
	\newblock The embedded Calabi-Yau Conjecture for finite genus.
	\newblock Preprint available at	arXiv:1806.03104 [math.DG].
	
					
	\bibitem{Me} S.\ Melo.
	\newblock Minimal graphs in $\widetilde{\mathrm{PSL}}_2(\mathbb{R})$ over unbounded domains.
	\newblock \emph{Bull.\ Braz.\ Math.\ Soc.\,} 45 (2014), no. 1, 91--116.
			
			
	\bibitem{MorRod} F. Morabito, M. Rodr\'{i}guez.
	\newblock Saddle Towers and minimal $k$-noids in $\mathbb{H}^2\times\mathbb{R}$.
	\newblock \emph{J. Inst. Math. Jussieu}, 11 (2012), no. 2, 333--349.					
						
	\bibitem{NR} B.\ Nelli, H.\ Rosenberg.
	\newblock Minimal surfaces in $\mathbb{H}^2\times\mathbb{R}$.
	\newblock\emph{Bull.\ Braz.\ Math.\ Soc.\,} 33 (2002), no. 2, 263-292.
							
							
	\bibitem{Penafiel}C. Peñafiel.
	\newblock Invariant surfaces in ${\widetilde{\mathrm{PSL}}_2(\mathbb{R},\tau)}$ and applications.
	\newblock \emph{Bull.\ Braz.\ Math.\ Soc.(N.S.),}
	43 (2012), no. 4, 545--578.
				
				
				
							
	\bibitem{RP}  J. Pyo, M. Rodríguez.
	\newblock Simply Connected Minimal Surfaces with Finite Total Curvature in $\h^2\times\R$.
	\newblock \emph{Int.\ Math.\ Res.\ Not.\,} 
	IMRN (2014), no. 11, 2944--2954.
								
								
								
	\bibitem{RT} M. Rodríguez, G. Tinaglia.
	\newblock  Non-proper complete minimal surfaces embedded in $\h^2\times\R$.
	\newblock \emph{Int.\ Math.\ Res.\ Not.\,} 
	IMRN (2015), no. 12, 4322--4334.
									
									
								
	
   	\bibitem{ST} R.\ Sa Earp, E.\ Toubiana.
	\newblock  An asymptotic theorem for minimal surfaces and existence results for minimal graphs in $\h^2\times\R$.
	\newblock \emph{Math.\ Ann.\,} 
	342 (2008), no. 2, 309--331.
									
   	\bibitem{Lima} V. Lima.
	\newblock  The slab theorem for minimal surfaces in $\mathbb{E}(-1,\tau)$.
	\newblock \emph{Ann.\ Global Anal.\ Geom.\,} 
	 51 (2017), no. 2, 189--208.
									
									
	\bibitem{Younes} R.\ Younes.
	\newblock Minimal surfaces in  ${\widetilde{\mathrm{PSL}}_2(\mathbb{R},\tau)}.$
		\newblock \emph{Illinois J.\ Math.\,}
		54 (2010), no. 2, 671--712. 						
									
									
									
									
									
									
									
									
									
									
									
									
									
									
									
									
									
									
									
									
									
									
									
									
									
									
									
									
									
									%
									
									
									


\end{thebibliography}
\end{document}